\newtheorem{theorem}{Theorem}[section]
\newtheorem{lemma}[theorem]{Lemma}
\newtheorem{prop}[theorem]{Proposition}
\newtheorem{cor}[theorem]{Corollary}
\newcommand{\qed}{$\Box$}
\newenvironment{proof}{\prepf\rm}{\endprepf}
\begin{document}

\title{The power graph of a torsion-free group\footnote{The second and third
author acknowledge funding from the School of Mathematics and Statistics for
summer internships during which this research was carried out.}}
\author{Peter J. Cameron\footnote{Corresponding author: pjc20@st-andrews.ac.uk},
Horacio Guerra\footnote{Current address: School of Mathematics and Statistics, Newcastle upon Tyne NE1 7RU, UK} and \v{S}imon Jurina\\
\small{University of St Andrews, North Haugh, St Andrews, Fife KY16 9SS, UK}}
\date{}
\maketitle

\begin{abstract}
The \emph{power graph} $P(G)$ of a group $G$ is the graph whose vertex set is
$G$, with $x$ and $y$ joined if one is a power of the other; the
\emph{directed power graph} $\vec{P}(G)$ has the same vertex set, with an arc
from $x$ to $y$ if $y$ is a power of $x$. It is known that, for finite groups,
the power graph determines the directed power graph up to isomorphism. However,
it is not true that any isomorphism between power graphs induces an isomorphism
between directed power graphs. Moreover, for infinite groups the power graph
may fail to determine the directed power graph.

In this paper, we consider power graphs of torsion-free groups. Our main 
results are that, for torsion-free nilpotent groups of class at most~$2$,
and for groups in which every non-identity element lies in a unique maximal
cyclic subgroup,
the power graph determines the directed power graph up to isomorphism. For
specific groups such as $\mathbb{Z}$ and $\mathbb{Q}$, we obtain more precise
results. Any isomorphism $P(\mathbb{Z})\to P(G)$ preserves orientation, so
induces an isomorphism between directed power graphs; in the case of
$\mathbb{Q}$, the orientations are either all preserved or all reversed.

We also obtain results about groups in which every element is contained in a
unique maximal cyclic subgroup (this class includes the free and free abelian
groups), and about subgroups of the additive group of $\mathbb{Q}$ and about
$\mathbb{Q}^n$.

MSC: 05C25, 20F99
\end{abstract}

\section{Introduction}

Let $G$ be a group. Then the \emph{power graph} of $G$ is the graph $P(G)$
with vertex set $V(P(G)) =G $ and edge set
\[E(P(G)) = \{\{x,y\} : (\exists\,n\in\mathbb{Z}\setminus\{0\})\,
(x = y^n\hbox{ or }y = x^n)\}.\]

The \emph{directed power graph} of $G$ is the directed graph $\vec{P}(G)$ with
vertex set $V(\vec{P}(G)) =G $ and arc set
\[E(\vec{P}(G)) = \{(x,y):(\exists\,n\in\mathbb{Z}\setminus\{0\})\,
(y = x^n)\}.\]
Thus, $\vec{P}(G)$ is an orientation of $P(G)$.

When $x$ and $y$ are connected in $P(G)$, we write $x \sim y$. If $a$ is a
power of $b$ in $G$ we denote this by $b \to a$.

In a graph $\Gamma$, we denote the set of neighbours of a vertex $x$ by
$N(x)$; in a directed graph, we denote the set of in-neighbours of $x$ by 
$I(x)$, and the set of out-neighbours by $O(x)$. Since we will always be
considering power graphs of groups, we denote $N_G(x)$ for the set of
neighbours of $x$ in $P(G)$, and $I_G(x)$, $O_G(x)$ for the sets of
in- and out-neighbours of $x$ in $\vec{P}(G)$.

\smallskip

The directed power graph was first defined in the context of semigroups by
Kelarev and Quinn~\cite{kq}; the undirected power graph by Chakrabarty 
\emph{et al.}~\cite{cgs}. The first author~\cite{pjc} showed the following
result:

\begin{theorem}
Let $G$ and $H$ be finite groups such that $P(G)\cong P(H)$. Then
$\vec{P}(G)\cong\vec{P}(H)$.
\end{theorem}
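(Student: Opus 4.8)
The plan is to reconstruct $\vec{P}(G)$ from $P(G)$ only up to isomorphism: the abstract already warns that a given isomorphism $P(G)\to P(H)$ need not respect orientation, so the real target is to show that the orientation is an isomorphism invariant of the undirected graph. I would first translate everything into the language of cyclic subgroups. Since $x\sim y$ in $P(G)$ exactly when $\langle x\rangle$ and $\langle y\rangle$ are comparable under inclusion, and $x\to y$ exactly when $\langle y\rangle\subseteq\langle x\rangle$, the digraph $\vec{P}(G)$ is nothing more than the inclusion poset of cyclic subgroups of $G$ together with the partition of $G$ into their generating sets, a cyclic subgroup of order $n$ contributing its $\phi(n)$ generators. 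Reconstructing $\vec{P}(G)$ up to isomorphism is therefore the same as reconstructing this labelled poset from $P(G)$.

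The basic tool is the closed-twin relation $x\equiv y$, defined by $N_G(x)\cup\{x\}=N_G(y)\cup\{y\}$. One direction is immediate: $\langle x\rangle=\langle y\rangle$ forces $x\equiv y$, since adjacency depends only on the cyclic subgroup generated. The work lies in the near-converse, that outside a short list of coincidences $x\equiv y$ already forces $\langle x\rangle=\langle y\rangle$, so that the twin classes are essentially the generating sets of cyclic subgroups and their sizes recover the values $\phi(n)$. I would isolate the genuine coincidences: the identity, which is adjacent to every vertex; and configurations in which the relevant cyclic subgroups form a chain, the extreme case being a cyclic subgroup of prime-power order, on whose elements $P(G)$ induces a complete graph and so records no orientation whatever. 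Generalized quaternion groups furnish a further instance, where a central involution becomes a graph-twin of the identity.

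For the generic part the reconstruction is then clean. Within a twin class the elements are mutually adjacent, and two distinct classes $A,B$ attached to genuine cyclic subgroups $H_A,H_B$ have \emph{all} cross-edges present when $H_A,H_B$ are comparable and \emph{no} cross-edges when they are incomparable; hence comparability of $H_A$ and $H_B$ is equivalent to $A\cup B$ being a clique, and this recovers the comparability graph of the cyclic-subgroup poset directly from $P(G)$. I would then orient it: the order of the subgroup attached to a class is recovered inductively from the identity $|H_A|=\sum\phi(|H_B|)$ summed over the classes $B$ lying below $A$, starting from the identity at the bottom, so that divisibility together with these totient sums pins down which of two comparable subgroups is the larger. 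Declaring $x\to y$ precisely when $\langle y\rangle\subseteq\langle x\rangle$ then recovers all arcs outside the degenerate pieces. I note that one cannot short-circuit this by neighbourhood containment, since for $\langle x\rangle\subsetneq\langle y\rangle$ the closed neighbourhoods $N_G(x)$ and $N_G(y)$ are in general incomparable once the poset branches below.

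The hard part will be the degenerate, locally complete pieces together with the residual symmetry. On a cyclic subgroup of order $p^k$ that is not separated by external adjacencies---in the extreme case $G$ itself---the induced power graph is complete, all its elements are graph-twins, and neither the chain of its subgroups nor the orientation among its elements is visible locally; one must infer from completeness and the prime-power order that the subgroup is cyclic and impose the canonical chain with $\phi(p^i)$ elements at level $i$. More generally the identity may be a twin of the generators (when $G$ is cyclic) or of a central involution (in generalized quaternion groups), so that graph-indistinguishable vertices play different group-theoretic roles. The essential point, and the main obstacle to make rigorous, is that all such ambiguity is immaterial to the isomorphism type: since the answer is wanted only up to isomorphism, any assignment of roles consistent with the recovered totient counts and comparability structure differs from the truth by a relabelling and hence yields a digraph isomorphic to $\vec{P}(G)$. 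Once this is established the remaining steps are routine bookkeeping with Euler's function.
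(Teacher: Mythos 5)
First, a point of reference: the paper you were given does not actually prove this statement — it is quoted from the first author's earlier paper \cite{pjc} — so the comparison is with that cited proof. Your outline does follow the same broad strategy as that proof: the closed-twin relation $N[x]=N[y]$, identification of twin classes with generator sets of cyclic subgroups, Euler-$\phi$ counting, and essentially the right exceptional cast (cyclic groups, generalized quaternion groups, complete prime-power pieces). So the approach is the correct one in outline.

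However, as written the proposal has a genuine gap at precisely the decisive step, in two places. First, your ``near-converse'' — that $x\equiv y$ forces $\langle x\rangle=\langle y\rangle$ outside a short list of coincidences — is the hard classification, and it is asserted rather than proved; moreover the coincidences are not a short sporadic list but a pervasive phenomenon. Twin classes routinely merge the generator sets of a whole chain of cyclic $p$-subgroups far away from the identity: in the dihedral group of order $2p^2$, all $p^2-1$ non-identity rotations form a single twin class, of size $p^2-1=\phi(p^2)+\phi(p)$, since every such rotation has closed neighbourhood equal to the rotation subgroup. So ``class sizes recover the values $\phi(n)$'' is false as stated; the correct statement — each non-exceptional class is a union of generator sets along a chain of $p$-power cyclic subgroups, of size $p^s-p^{r-1}$ — must be proved and then threaded through all the subsequent counting, since a given class size no longer determines an order. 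Second, your orientation-recovering induction is circular: the identity $|H_A|=\sum_{B\le A}\phi(|H_B|)$ can only be evaluated once you know which comparable classes lie \emph{below} $A$, but the undirected data yields only the comparability graph of the poset of cyclic subgroups, and a comparability graph generally admits many transitive orientations (Gallai); ``starting from the identity at the bottom'' does not break the symmetry, because for a class whose neighbours in the quotient graph include both larger and smaller subgroups nothing local distinguishes up from down. You flag this yourself: the claim that any role assignment consistent with the recovered totient counts and comparability structure differs from the truth by a relabelling is named as ``the main obstacle to make rigorous'' — but that claim \emph{is} the theorem, so deferring it leaves the proof incomplete. The cited proof avoids this by first establishing the full classification of twin classes (including the merged $p$-chain classes and the identity-class exceptions) and then deriving element orders from the class structure directly, rather than positing a consistent orientation and hoping all consistent orientations are equivalent.
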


However, it is not true that any isomorphism from $P(G)$ to $P(H)$ preserves
orientations of edges; and the theorem above fails for infinite groups.

\paragraph{Example} Let $G$ be the cyclic group of order $6$, generated by
$a$. Then $P(G)$ is the complete graph $K_6$ with the two edges $\{a^2,a^3\}$
and $\{a^4,a^3\}$ removed. So its automorphism group permutes $\{1,a,a^5\}$
transitively; the power graph does not determine the identity uniquely.

\paragraph{Example} (from~\cite{cg}) Let $G$ be the \emph{Pr\"ufer group}
$\mathbb{Z}_{p^\infty}$, defined as the quotient $\mathbb{L}_p/\mathbb{Z}$,
where $\mathbb{L}_p$ is the set of rationals with $p$-power denominators
(where $p$ is prime). Then every element of $G$ has $p$-power order, and every
proper subgroup is a finite cyclic group; so $P(G)$ is a countable complete
graph. Thus, knowledge of $P(G)$ does not even determine the prime $p$.

\medskip

In this paper we consider torsion-free groups, to avoid difficulties suggested
by the above examples. Our main results are the following theorems.

\begin{theorem}\label{t0}
Let $H$ be a group with $P(H)$ isomorphic to $P(\mathbb{Z})$. Then $H$ is
isomorphic to $\mathbb{Z}$, and any isomorphism from $P(\mathbb{Z})$ to $P(H)$
induces an isomorphism from $\vec{P}(\mathbb{Z})$ to $\vec{P}(H)$.
\end{theorem}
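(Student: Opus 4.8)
The plan is to read off from the undirected graph, one layer at a time, the identity of $H$, then the group $H$ itself, and finally the orientation of every edge, each step being forced by the hypothesis $P(H)\cong P(\mathbb{Z})$. First I would locate the identity. In any group a non-identity element $g$ is joined to $g^2$ (and to the identity when $g^2=e$), so it is never isolated; on the other hand the identity $e$ is isolated precisely when $H$ is torsion-free, since $e\sim g$ would force $g$ to have finite order. As $0$ is the unique isolated vertex of $P(\mathbb{Z})$, the same must hold in $P(H)$; hence $H$ is torsion-free and every isomorphism $\phi\colon P(\mathbb{Z})\to P(H)$ sends $0$ to the identity of $H$.

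Next I would recover the group. Call a vertex \emph{dominant} if it is adjacent to every vertex other than the isolated identity. In $\mathbb{Z}$ the dominant vertices are exactly $1$ and $-1$: every integer is a multiple of $\pm1$, whereas for $|x|\ge 2$ one may pick a prime $y$ not dividing $x$, so that $x\not\sim y$. Thus $P(H)$ has exactly two dominant vertices, the images $c,c'$ of $\pm1$. The key lemma is that a torsion-free group whose power graph has a dominant vertex is infinite cyclic. Since a dominant $c$ is comparable to every $g$, in either case ($g\in\langle c\rangle$ or $c\in\langle g\rangle$) the element $g$ commutes with $c$; hence $c$ is central. One then shows $H$ is abelian, after which the relation ``$\langle c,x\rangle$ is cyclic for every $x$'' places every element in the rational line through $c$ inside the divisible hull, so $H$ has torsion-free rank $1$ and embeds in $\mathbb{Q}$; among such groups only $\mathbb{Z}$ admits a dominant vertex, giving $H\cong\mathbb{Z}$. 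The one step here that needs genuine care is ruling out a non-abelian $H$, i.e. checking that products of non-commuting elements cannot all remain comparable to $c$.

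Finally, and this I expect to be the main obstacle, I would show that $\phi$ preserves orientation. Fixing a group isomorphism $\mathbb{Z}\to H$ (which induces an isomorphism $\vec{P}(\mathbb{Z})\to\vec{P}(H)$) reduces the problem to showing that every automorphism of $P(\mathbb{Z})$ is also an automorphism of $\vec{P}(\mathbb{Z})$. The genuine difficulty is that comparability graphs do not in general remember the direction of their edges, so I must exploit a feature special to $\mathbb{Z}$: in $\vec{P}(\mathbb{Z})$ each vertex $x$ has infinitely many out-neighbours (its multiples) but only finitely many in-neighbours (its divisors). Concretely I would pass to the quotient by the closed-twin relation $N_{\mathbb{Z}}(x)\cup\{x\}=N_{\mathbb{Z}}(y)\cup\{y\}$, whose classes are exactly the pairs $\{x,-x\}$, obtaining the comparability graph of the divisibility order on the positive integers. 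This order has $1$ as unique least element, matching the unique universal vertex of the quotient, but it has no greatest element, so a graph automorphism (which must fix that universal vertex) cannot reverse the order; hence the divisibility order, and with it the direction of every arc between non-twin vertices, is recovered. Swapping the two members of a twin class is orientation-neutral, since $x$ and $-x$ are joined by arcs in both directions. Combining these, every automorphism of $P(\mathbb{Z})$ respects $\vec{P}(\mathbb{Z})$, and therefore so does $\phi$.

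The crux I would spend most effort on is the orientation-rigidity claim just used: that the comparability graph of the divisibility lattice admits no edge-reversing automorphism. The least-element-but-no-greatest-element observation rules out a \emph{global} reversal, but one must still show that no automorphism reverses an individual arc; here I would argue that, having fixed the universal vertex $1$, the finiteness of each down-set (set of divisors) versus the infinitude of each up-set (set of multiples) propagates outward to pin the direction of every comparable pair. This asymmetry is exactly what distinguishes $\mathbb{Z}$ from $\mathbb{Q}$, where up-sets are also infinite in both directions and a global reversal is therefore possible.
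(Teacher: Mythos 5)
Your architecture is genuinely different from the paper's, and inverted in order. The paper works with the single local invariant $S_{a,b}=\{c: c\sim b,\ c\not\sim a\}$: Lemma~\ref{finite} shows that in $\mathbb{Z}$, for $a\sim b$ with $a\ne\pm b$, one has $a\to b$ if and only if $S_{a,b}$ is finite, while the proof of Lemma~\ref{inverse} shows in \emph{any} torsion-free group that $a=b^m$ with $|m|>1$ forces $S_{a,b}$ infinite. Since the resulting trichotomy (both difference sets empty; one finite and one infinite) is a graph invariant, every isomorphism $P(\mathbb{Z})\to P(H)$ preserves orientation edge by edge, with no prior knowledge of $H$, and $H\cong\mathbb{Z}$ is then read off at the end from the directed source vertex. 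You instead prove $H\cong\mathbb{Z}$ first (via dominant vertices in the undirected graph), then reduce orientation-preservation to automorphisms of $P(\mathbb{Z})$ by composing with a group isomorphism, and analyse those automorphisms through the closed-twin quotient. The reduction is valid, the twin classes are indeed $\{x,-x\}$ (and swapping within a class is arc-neutral), and the finite-in/infinite-out asymmetry you identify is exactly the asymmetry the paper exploits; your route buys a standalone dominant-vertex lemma and a clean picture of automorphisms of $P(\mathbb{Z})$ as automorphisms of the divisibility lattice, at the cost of extra machinery the paper's local invariant makes unnecessary.

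Two of your steps are under-specified, and both are repaired by the same coprime-exponent computation. First, in the dominant-vertex lemma, the chain ``$c$ central $\Rightarrow H$ abelian $\Rightarrow$ rank $1$ $\Rightarrow H\cong\mathbb{Z}$'' stalls exactly where you flag it: centrality of $c$ gives no purchase on whether two elements $x,y$ with $c=x^n=y^m$ commute (the paper's free-product example $a^m=b^n$ shows such relations do not force commutation in general). The detour is avoidable: if $c=g^m$ with $|m|\ge 2$, choose $j>1$ coprime to $m$; then $g^j$ is comparable to $c$ in neither direction (torsion-freeness cancels exponents, so $c=(g^j)^k$ gives $j\mid m$ and $g^j=c^k$ gives $m\mid j$, both contradicting $\gcd(j,m)=1$), contradicting dominance; hence every element lies in $\langle c\rangle$ and $H=\langle c\rangle$. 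Second, ``down-set'' and ``up-set'' are order-theoretic notions not visible to a graph automorphism, so ``propagates outward'' is not yet an argument: ruling out a \emph{global} reversal via least/greatest elements does not pin individual edges, since a comparability-graph automorphism need not be an order automorphism or anti-automorphism at all. The precise local criterion you need is again $S_{a,b}$: for comparable $a\ne\pm b$, the tail of the arc is the vertex whose difference set is finite, and once you invoke this, each edge is oriented independently and the twin quotient and the propagation scheme become dispensable. So your proposal is completable, but both of its cruxes are closed only by importing, in effect, the computation of Lemmas~\ref{inverse} and~\ref{finite}.
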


\begin{theorem}\label{t1}
Let $G$ and $H$ be nilpotency class $2$ torsion-free groups. Then
$P(G)\cong P(H)$ implies $\vec{P}(G)\cong\vec{P}(H)$.
\end{theorem}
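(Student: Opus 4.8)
The plan is to show that the undirected power graph of a torsion-free nilpotent group of class 2 determines the directed power graph up to isomorphism. The key structural feature I would exploit is that in a torsion-free group the identity element is the unique vertex adjacent to everything (it is a power of nothing but every element's zeroth... no, powers are nonzero; rather $1 = x^0$ is excluded, but $1$ is joined to every $x$ since $1 = x^n$ has no solution for $n\neq 0$ unless... ). Let me think about how to recover orientation locally from the undirected graph.

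First I would establish the basic local invariants. In a torsion-free group, for each nonidentity element $x$, the set of elements comparable to $x$ (i.e. the closed neighbourhood structure) reflects the cyclic subgroup containing $x$: if $b \to a$, then $\langle a\rangle \leq \langle b\rangle$, and comparability in the power graph corresponds to one cyclic subgroup containing the other. The crucial point is that in a torsion-free group, $\langle x\rangle \cong \mathbb{Z}$, so the "directed power graph restricted to a cyclic subgroup" is the divisibility order on $\mathbb{Z}\setminus\{0\}$ together with the identity above everything. I would first argue that an isomorphism $\phi\colon P(G)\to P(H)$ must send the identity of $G$ to the identity of $H$ — this follows because the identity is the unique vertex joined to all others (in a torsion-free group $1$ is adjacent to every element, and no other element is universally adjacent, since two "incomparable" generators of distinct maximal cyclic subgroups are nonadjacent).

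The main work is to recover orientation on each comparability edge. The plan is to use the following local criterion: given an edge $\{x,y\}$ with neither being the identity, $x\to y$ (i.e. $y\in\langle x\rangle$) iff $N_G[y]\subseteq N_G[x]$ in the appropriate sense — more precisely, the out-neighbourhoods are determined by neighbourhood containment among comparable vertices. In $\mathbb{Z}$, if $y = x^n$ with $|n|>1$ then every power of $y$ is a power of $x$, so $\langle y\rangle < \langle x\rangle$ and hence the closed neighbourhood of $y$ within the common cyclic subgroup sits inside that of $x$. The subtlety in class 2 (as opposed to abelian or free) is that distinct elements may generate the same cyclic subgroup only when they are $x$ and $x^{-1}$, since torsion-freeness forces $\langle x\rangle = \langle y\rangle$ to imply $y = x^{\pm1}$; so each maximal cyclic subgroup looks like a copy of the divisibility poset on $\mathbb{Z}$, and the pair $\{x,x^{-1}\}$ is the unique pair of mutually-adjacent "top" elements. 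I would identify these generator-pairs graph-theoretically as the twins (vertices with identical closed neighbourhoods) and show orientation is forced on every edge once we know which endpoint generates the larger cyclic subgroup, which is read off from neighbourhood containment.

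The hard part, I expect, is handling the interaction between different maximal cyclic subgroups where the class-2 hypothesis genuinely enters: I must rule out "accidental" adjacencies and show that the poset of cyclic subgroups under inclusion is faithfully encoded in $P(G)$, so that the isomorphism type of $\vec P(G)$ depends only on the combinatorial data of how maximal cyclic subgroups overlap. Here I would invoke the structure of class-2 torsion-free groups — in particular that the centre is large and that isolated cyclic subgroups (the "branches") are each isomorphic to the $\mathbb{Z}$-divisibility poset — to argue that reconstructing the directed structure reduces to reconstructing it on each cyclic subgroup plus recording their pairwise intersections, all of which are neighbourhood-theoretic invariants. The delicate step is verifying that two incomparable generators are always nonadjacent in the class-2 setting (so that maximal cyclic subgroups are recognisable as maximal cliques of a specific local form), since a failure of this would allow an isomorphism to permute generators across subgroups and scramble orientations; I anticipate the nilpotency-class-$2$ commutator identities are exactly what guarantees this rigidity.
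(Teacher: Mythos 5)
Your central strategy---recovering the orientation of every edge locally from the undirected graph, via neighbourhood containment and graph-theoretically identified ``top'' generator pairs---cannot succeed, because it would prove a statement that is false already in the abelian (hence class-$2$) setting. Take $G=H=\mathbb{Q}$: the map $x\mapsto a^2/x$ of Lemma~\ref{isomorphism} is an automorphism of $P(\mathbb{Q})$ that reverses \emph{every} arc of $\vec{P}(\mathbb{Q})$, so no invariant of the undirected graph can distinguish $x\to y$ from $y\to x$ there; indeed $I(x)\cong O(x)$ for every nonidentity $x$, both infinite. Relatedly, $\mathbb{Q}$ (and more generally a subgroup of $\mathbb{Q}$ of unbounded height) contains no maximal cyclic subgroup at all, so your ``twin pairs $\{x,x^{-1}\}$ at the top of each maximal cyclic subgroup'' and your reduction to ``the poset of maximal cyclic subgroups and their intersections'' have nothing to grip: by Proposition~\ref{torsion}, in a class-$2$ torsion-free group distinct maximal cyclic subgroups meet trivially, so the components where maximal cyclic subgroups exist are the easy ones, and the genuinely hard components are exactly those without any. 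Two smaller but real errors: under the paper's definition (exponent $n=0$ excluded), the identity of a torsion-free group is the unique \emph{isolated} vertex (Lemma~\ref{order}), not a universal one; and even in $\mathbb{Z}$ your containment criterion fails ($2\to 6$, yet $3\in N(6)\setminus N(2)$)---the correct criterion there is finiteness of the difference set $S_{a,b}$ (Lemma~\ref{finite}), and even that criterion breaks down in dense components like $\mathbb{Q}$, where $S_{a,b}$ and $S_{b,a}$ are both infinite for comparable non-inverse pairs.

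Note that the theorem asserts only the abstract isomorphism $\vec{P}(G)\cong\vec{P}(H)$, not that a given isomorphism $P(G)\to P(H)$ preserves orientation, and the paper's proof exploits exactly this slack, which your plan does not. The paper first shows (Lemma~\ref{component}) that each connected component of $P(G)$ is a subgroup in which any two elements generate a cyclic group, i.e.\ a torsion-free locally cyclic group, hence embeddable in $\mathbb{Q}$ by Baer's theorem; this is where the class-$2$ hypothesis enters. It then proves a dichotomy for such subgroups (Lemma~\ref{Cardinalities}): either $I(x)$ is finite for all $x$, or infinite for all nonidentity $x$. In the finite case orientation \emph{is} locally recoverable (the out-neighbourhood is the unique infinite connected component of $N(x)'$ in the complement), but in the infinite case the paper abandons the given isomorphism $f$ entirely and builds a new directed isomorphism $f'$ by an inductive, vertex-by-vertex path-following construction (the second half of the proof of Lemma~\ref{Lemma 3.16}). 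Your proposal has no counterpart to this construction, and that is precisely where the content of the theorem lies.
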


We will see examples showing that, under these hypotheses, it is not true
that $P(G)\cong P(H)$ implies $G\cong H$; and also, examples to show that
some hypothesis on $G$ is needed.

\begin{theorem}\label{t2}
Let $G$ be a torsion-free group in which any non-identity element lies in a
unique maximal cyclic subgroup. Then, for any group $H$, any isomorphism
from $P(G)$ to $P(H)$ induces an isomorphism from $\vec{P}(G)$ to $\vec{P}(H)$.
\end{theorem}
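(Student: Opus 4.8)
The plan is to use the hypotheses to cut $P(G)$ into mutually independent pieces, each a copy of $P(\mathbb{Z})$ with its isolated vertex deleted, and then to recover the orientation on each piece separately. First I would record what torsion-freeness and the hypothesis buy us. Since $G$ is torsion-free, no non-identity $x$ satisfies $e=x^n$ with $n\ne 0$, so $e$ is isolated; and it is the \emph{only} isolated vertex, because any $x\ne e$ is joined to a power of itself (to $x^2$ if $x^2\ne x$, and to $e$ if $x^2=e$). Next, for $x\ne e$ let $C_x$ denote the unique maximal cyclic subgroup containing $x$; then $N_G(x)\subseteq C_x$. Indeed, $y=x^n$ gives $y\in\langle x\rangle\subseteq C_x$, while $x=y^n$ puts $x$ in $\langle y\rangle$ and hence in the maximal cyclic overgroup of $\langle y\rangle$, which by uniqueness is $C_x$, so again $y\in C_x$. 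Since distinct maximal cyclic subgroups meet only in $e$, deleting $e$ exhibits $P(G)$ as the disjoint union, over the maximal cyclic subgroups $C_i\cong\mathbb{Z}$, of the graphs $P(C_i)\setminus\{e\}$; write $D$ for their common isomorphism type $P(\mathbb{Z})\setminus\{0\}$.

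Next I would transfer this picture across $\phi$ and reduce the problem. The isolated vertex of any power graph must be the identity (by the argument above), and $P(H)\cong P(G)$ has an isolated vertex, so that vertex is $e_H$ and hence $H$ is torsion-free as well. Thus $P(H)$ has the identical shape: one isolated vertex $e_H$ together with components each isomorphic to $D$. Isolated vertices carry no arcs on either side, and every arc of $\vec{P}(G)$ or $\vec{P}(H)$ joins two vertices of a single component; so it suffices to show that $\phi$ carries the orientation on each component of $\vec{P}(G)$ to that on the corresponding component of $\vec{P}(H)$. Both orientations are transitive (if $b$ is a power of $a$ and $c$ of $b$, then $c$ is a power of $a$), so the whole question reduces to a statement about the single graph $D$: the transitive orientation induced on $D$ by any group's directed power graph is canonical, and in particular is respected by every isomorphism of $D$.

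The heart of the matter is therefore this orientation recovery on $D$. Identifying $D$ with the positive integers under divisibility, each vertex $n$ doubled into the pair $\{n,-n\}$ of generators of $\langle n\rangle$, the orientation to be recovered is $m\to n$ iff $m\mid n$. The pair $\{1,-1\}$ is the unique pair of vertices adjacent to the whole component (any $n$ with $|n|\ge 2$ misses a coprime integer), so it is fixed by every automorphism and serves as a reference point: in the divisibility orientation it points out to everything. I would then recover the full divisibility order from the comparability graph $D$, using that its unique dominating vertex is the minimum, and verify that every automorphism of $D$ preserves this order. This orientation-rigidity of $P(\mathbb{Z})$ is exactly the phenomenon used in proving Theorem~\ref{t0}, and I would reuse it here on each component.

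Finally I would assemble the pieces: on each component $\phi$ is an isomorphism $D\to D$, hence preserves the divisibility orientation by the core lemma, and together with the arc-free matching of the two isolated vertices this shows that $\phi$ sends each arc of $\vec{P}(G)$ to an arc of $\vec{P}(H)$ and conversely, i.e.\ $\phi$ induces an isomorphism $\vec{P}(G)\to\vec{P}(H)$. I expect the main obstacle to be the core lemma on $D$. The difficulty is that a comparability graph typically admits several transitive orientations, at least a poset and its dual, so one must show that divisibility is singled out: that $D$ is indecomposable enough that only the divisibility order and its reverse occur, and that the reverse cannot arise from a group, since it would force the generator to be a power of every element of the component, i.e.\ to lie in every $\langle w\rangle$ — impossible for a non-identity element of an infinite cyclic group. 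Establishing this rigidity, rather than the bookkeeping of the decomposition, is where the real work lies.
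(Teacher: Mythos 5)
Your structural work reproduces the paper's route exactly: the partition of the non-identity elements into maximal cyclic subgroups, the observation $N_G(x)\subseteq C_x$, the resulting decomposition of $P(G)$ into an isolated identity plus components isomorphic to $D=P(\mathbb{Z})\setminus\{0\}$, the transfer of this shape to $H$ via the isolated-vertex argument (Lemma~\ref{order}), and the reduction to recovering the orientation component by component --- this is precisely how Theorem~\ref{uniquecyclic} is proved. The gap is exactly where you say the real work lies, and your proposed route to the core lemma is flawed as stated. The claim that $D$ admits only the divisibility orientation and its reverse is false: each inverse pair $\{n,-n\}$ consists of adjacent true twins with no vertex strictly between them, so (substituting a $2$-chain into each class of the quotient order) every twin edge can be oriented independently, giving continuum-many transitive orientations of $D$; uniqueness up to duality could at best hold for the quotient modulo these twin modules, and you prove nothing about that quotient. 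Moreover $\vec{P}(G)$ is not an orientation at all on inverse pairs --- both arcs are present --- so the comparability-graph framing does not even express the statement you need.

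The repair is the machinery you allude to but never deploy. Lemmas~\ref{inverse} and~\ref{finite} give a trichotomy for adjacent $a\neq b$ in a $\mathbb{Z}$-component: $S_{a,b}=S_{b,a}=\varnothing$ if and only if $a=b^{-1}$, and otherwise $a\to b$ if and only if $S_{a,b}$ is finite; on the $H$ side, the proof of Lemma~\ref{inverse} shows that $a=b^m$ with $|m|>1$ forces $S_{a,b}$ infinite in any torsion-free group. Since $S_{a,b}\subseteq N(b)$ is computed entirely within one component, the orientation-transfer argument in the proof of Theorem~\ref{t0} applies verbatim componentwise --- note you cannot cite Theorem~\ref{t0} as a black box, since your $H$ is not a group whose whole power graph is $P(\mathbb{Z})$. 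This also makes your separate reversal-exclusion step unnecessary: the finite/infinite asymmetry of the trichotomy forbids reversal automatically, which is exactly what distinguishes the $\mathbb{Z}$ components from $\mathbb{Q}$, where both $S$-sets are infinite and reversal genuinely occurs (Lemma~\ref{isomorphism}).
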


The class of groups covered by this theorem include direct sums of copies of
the additive group of $\mathbb{Z}$, free groups, and indeed
any torsion-free nilpotent group of class~$2$ in which every element is 
contained in some maximal cyclic subgroup (see Proposition~\ref{torsion}).
Our result about such groups is stronger than indicated: see
Theorem~\ref{uniquecyclic}.

\begin{theorem}\label{t3}
Let $\mathbb{Q}$ be the additive group of rational numbers, and
$G=\mathbb{Q}^n$. Then, for a group $H$, if $P(G)\cong P(H)$, then
$\vec{P}(G)\cong\vec{P}(H)$. Moreover, if $n=1$, then any isomorphism from
$P(G)$ to $P(H)$ either preserves or reverses the orientation of
edges.
\end{theorem}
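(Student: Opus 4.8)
The plan is to recover $\vec{P}(\mathbb{Q}^n)$ from the undirected graph $P(\mathbb{Q}^n)$ up to a symmetry that every group's directed power graph is forced to respect, and then to transport this structure through the given isomorphism $\Phi\colon P(\mathbb{Q}^n)\to P(H)$. First I would locate the identity and the cyclic subgroups purely graph-theoretically. In any group the identity is the only vertex that can be isolated in the power graph (a nonidentity $x$ is always joined to $x^2$, or to the identity if $x$ has order $2$), and it is isolated exactly when the group is torsion-free; since $P(\mathbb{Q}^n)$ has a unique isolated vertex, $H$ must be torsion-free and $\Phi$ sends $0$ to $e_H$. Next, writing $x\equiv y$ when $N[x]=N[y]$, a short computation shows that in $P(\mathbb{Q}^n)$ the classes are exactly the pairs $\{v,-v\}$: if $w=kv$ with $|k|\ge 2$ then $pv$ for a large prime $p$ lies in $N[v]\setminus N[w]$, while non-collinear $v,w$ are separated by $v$ itself. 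In $H$ one always has $x\equiv x^{-1}$, so transporting the class sizes through $\Phi$ forces every nonidentity class of $P(H)$ to be exactly $\{x,x^{-1}\}$.

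Hence $\Phi$ induces an isomorphism $\bar\Phi$ between the quotient graphs, which are the comparability graphs $\bar P(\mathbb{Q}^n)$ and $\bar P(H)$ of the posets of nonidentity cyclic subgroups ordered by inclusion. Since the directed power graph is determined by this quotient together with the (automatic) bidirected edges inside each twin class, the whole problem reduces to recovering the inclusion order on cyclic subgroups from its comparability graph. The directed power graph of $H$ induces a transitive orientation of $\bar P(H)$, namely the arc $\langle x\rangle\to\langle y\rangle$ whenever $\langle y\rangle\subsetneq\langle x\rangle$; likewise for $\mathbb{Q}^n$. So I need to know how many transitive orientations these comparability graphs admit.

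The key lemma, and the main obstacle, is that the comparability graph of the poset of nonzero cyclic subgroups of $\mathbb{Q}$ admits a unique transitive orientation up to reversal. Identifying a cyclic subgroup with the exponent vector of its positive generator, this poset is $\bigoplus_p\mathbb{Z}$ with the componentwise order. I would prove uniqueness via the forcing relation for comparability graphs: if $\{x,y\}$ and $\{x,z\}$ are edges while $\{y,z\}$ is not, then any transitive orientation must orient them consistently at $x$, since otherwise transitivity would make $y,z$ comparable. Using the configurations $v,\ v+\mathbf{e}_p,\ v+\mathbf{e}_q$ for distinct primes $p,q$ (where $\mathbf{e}_p$ is the unit step at $p$, and $v+\mathbf{e}_p,\ v+\mathbf{e}_q$ are incomparable) one sees that all upward prime-steps at a vertex are forced together, and dually all downward ones; chaining such configurations along the lattice shows that every edge is forced into a single implication class, so the only transitive orientations are the inclusion order and its dual. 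I expect this forcing-connectivity argument to be the only delicate point of the proof; the reduction above and the assembly below are routine once the identity, the twin classes, and the component structure have been read off.

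Granting the key lemma, $\bar\Phi$ carries the inclusion-orientation of $\bar P(\mathbb{Q}^n)$ to a transitive orientation of $\bar P(H)$, which must therefore agree with, or be the reverse of, the inclusion-orientation of $H$ on each connected component. For general $n$, the graph $\bar P(\mathbb{Q}^n)$ has one connected component for each $1$-dimensional subspace of $\mathbb{Q}^n$ (non-collinear elements are non-adjacent), each component being a copy of $\bar P(\mathbb{Q})$; on every line $\ell=\mathbb{Q}v$ the reciprocal map $tv\mapsto (1/t)v$ (with $0\mapsto 0$, identity off $\ell$) is an automorphism of $P(\mathbb{Q}^n)$ reversing orientation on $\ell$ and preserving it elsewhere. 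Precomposing $\Phi$ with the product of these inversions over the components where $\Phi$ reverses, I splice together a genuine isomorphism $\vec{P}(\mathbb{Q}^n)\to\vec{P}(H)$, giving the first assertion. When $n=1$ there is a single component, so $\Phi$ itself either preserves or reverses every orientation, which is the ``moreover'' statement; and since the reciprocal map $q\mapsto 1/q$ (with $0\mapsto 0$) is an orientation-reversing automorphism of $P(\mathbb{Q})$, the reversing case still yields $\vec{P}(\mathbb{Q})\cong\vec{P}(H)$.
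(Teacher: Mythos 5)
Your proposal is correct, but it reaches the crucial rigidity step by a genuinely different route from the paper. The paper never forms a quotient: for $n=1$ it works directly in the complement of the power graph, showing (Lemma~\ref{Q}) that $O(x)'$ is connected and that no complement edge joins $I(x)$ to $O(x)$, so that any isomorphism $f$ matches $\{I(x),O(x)\}$ with $\{I(f(x)),O(f(x))\}$ (Lemma~\ref{isom_nbrs}); it then propagates the choice made at one edge along paths, using connectedness of $P(\mathbb{Q})$, to conclude that $f$ preserves all orientations or reverses all of them, the reversed case being harmless because of the explicit anti-automorphism $\varphi_a\colon x\mapsto a^2/x$ of Lemma~\ref{isomorphism}; the $\mathbb{Q}^n$ case is then assembled component by component, exactly as you do. You instead collapse the closed-twin classes $\{x,x^{-1}\}$ (your twin computation in $\mathbb{Q}^n$ and the transport of class sizes to $H$ are correct, and replace the paper's Lemma~\ref{inverse}) and appeal to unique transitive orientability of the comparability graph of the poset $\bigoplus_p\mathbb{Z}$ of nonzero cyclic subgroups, i.e.\ to Gallai-type forcing. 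That key lemma is true --- indeed the quotient graph is prime (it has no nontrivial modules: given distinct subgroups $A,B$ one can build a third cyclic subgroup comparable with exactly one of them), which already yields exactly two transitive orientations --- so your route buys conceptual clarity: it locates the only possible ambiguity in the twin classes and in the choice of a global orientation per component, while the paper's route is more elementary and its Lemma~\ref{isom_nbrs} is reused elsewhere. The one genuinely underspecified point in your sketch is the forcing chain from an up-edge to a down-edge: at a shared vertex $v$ the far endpoints $w\ge v$ and $u\le v$ are \emph{always} comparable, so the configurations $v,\,v+\mathbf{e}_p,\,v+\mathbf{e}_q$ you name never link the upward and downward steps directly; one must route through a third prime $r$ and a non-unit edge, e.g.\ with $a=v-\mathbf{e}_q$ and $b=v+\mathbf{e}_p$ the chain $\{v,b\}\sim\{b,b-\mathbf{e}_r\}\sim\{b,a\}\sim\{a,a+\mathbf{e}_r\}\sim\{a,v\}$, where each consecutive pair shares a vertex and has incomparable far endpoints (the differences involved are $\mathbf{e}_p-\mathbf{e}_r$, $\mathbf{e}_p+\mathbf{e}_q-\mathbf{e}_r$, and $\mathbf{e}_q-\mathbf{e}_r$). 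Since there are infinitely many primes such an $r$ always exists, and arbitrary edges $\{v,w\}$ join the class via $\{v,v+\mathbf{e}_q\}$ for $q$ outside the support of $w-v$; so the gap is fillable, but you should spell this chain out, as it is exactly the point at which the whole argument could otherwise fail.
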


We also include some detailed results about power graphs of subgroups of
$\mathbb{Q}$, where we exhibit non-isomorphic subgroups with isomorphic power
graphs.

We note that further investigation of the power graph can be found in
\cite{aacns}; this and other papers concentrate on graph-theoretic aspects.

\section{On the definition}

We have excluded $n=0$ in the definition of edges in the power graph and the
directed power graph. We make some brief comments on this. The alternative
definition would ensure that there is an edge from every vertex $x$ to $x^0=1$
in the directed power graph.

If $G$ is a torsion group (in particular, if $G$ is finite), then this makes
no difference at all, since our definition as stated gives an edge from 
$x$ to $1$ if $x^n=1$ for some $n>0$. 

We are concerned here with torsion-free groups. A group $G$ is
\emph{torsion-free} if every non-identity element has infinite order. Note
that, if $G$ is torsion-free and $a\in G$ satisfies $a^m=a^n$ where $m\ne n$,
then $a^{m-n}=1$, so $a=1$.

For a torsion-free group, with the 
definition modified to allow $n=0$, the identity is the unique sink in the
directed power graph (there is an arc from every vertex to it), and so we can
uniquely identify it. The situation in the undirected power graph is a little
different:

\begin{prop}
Let $x$ be an element of the torsion-free group $G$. Then $x$ is joined to
every vertex in the undirected power graph (including edges from $x$ to $x^0$)
if and only if one of the following holds:
\begin{itemize}\itemsep0pt
\item $x$ is the identity;
\item $G$ is the infinite cyclic group and $x$ is a generator.
\end{itemize}
\end{prop}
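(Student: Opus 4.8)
The statement is an equivalence, so the plan is to treat the two implications separately; the reverse implication is immediate, and essentially all the work lies in the forward direction. For the reverse direction I would argue as follows. If $x=1$, then for every $y\in G$ we have $x=y^0$, so the modified definition (with $n=0$ permitted) gives $x\sim y$; hence the identity is joined to every vertex. If instead $G$ is infinite cyclic and $x$ is a generator, then every element of $G$ is a genuine power of $x$, so again $x\sim y$ for all $y$.

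For the forward direction, suppose $x\ne 1$ is joined to every vertex; I want to show that $G=\langle x\rangle$, for then $\langle x\rangle$ is a nontrivial torsion-free cyclic group, forcing $G\cong\mathbb{Z}$ with $x$ a generator. I would argue by contradiction: assume there exists $y\in G\setminus\langle x\rangle$. Since $x\sim y$ but $y\notin\langle x\rangle$ (so $y$ is not a power of $x$), the only way the adjacency can hold is that $x$ is a power of $y$, say $x=y^m$. Here $m\ne 0$ because $x\ne 1$, and $|m|\ge 2$ because $|m|=1$ would put $y\in\langle x\rangle$; after replacing $y$ by $y^{-1}$ if necessary I may assume $m\ge 2$.

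The key step --- and the main obstacle --- is then to produce, inside the infinite cyclic subgroup $\langle y\rangle\cong\mathbb{Z}$, a vertex not adjacent to $x$. Within $\langle y\rangle$ the power relation between $y^i$ and $y^j$ is exactly divisibility between $i$ and $j$, and membership in $\langle x\rangle=\langle y^m\rangle$ is exactly divisibility by $m$. So I would choose a prime $k>m$ and set $z=y^k$: then $m\nmid k$ shows $z$ is not a power of $x$, and $k\nmid m$ shows $x$ is not a power of $z$, while $z\ne 1$ rules out the $n=0$ adjacency. Hence $x\not\sim z$, contradicting the hypothesis that $x$ is joined to every vertex. This contradiction gives $G=\langle x\rangle$ and completes the proof. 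The only delicate point is recognizing that a proper root $y$ of $x$ automatically generates a copy of $\mathbb{Z}$ in which some exponent is incomparable to $m$ under divisibility; once this is seen, the required non-neighbour of $x$ is immediate.
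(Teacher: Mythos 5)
Your proof is correct and takes essentially the same route as the paper's: both arguments extract a proper root $y$ of $x$ (so $x=y^m$ with $|m|>1$) and then exhibit $z=y^k$ with the exponent $k$ incomparable to $m$ under divisibility, using torsion-freeness to cancel exponents and conclude $x\not\sim z$. Your choice of a prime $k>m$ is simply a particular instance of the paper's choice of an exponent $>1$ coprime to $m$, so there is nothing substantive to distinguish the two arguments.
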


\begin{proof}
The sufficiency is clear. So suppose that, for every $y\in G$, either $y=x^n$
or $x=y^n$ for some integer $n$, but $x$ is neither the identity nor a 
generator. Note that $n$ is unique in either case. Since $x$ is not a 
generator, there exists $y$ such that $x=y^n$ for some $n>1$. Choose an
integer $m>1$ coprime to $n$ and let $z=y^m$.  If $x=z^k$, then $y^n=y^{mk}$,
so $n=mk$, implying that $m\mid n$; if $z=x^k$, then $y^{nk}=y^m$, whence
$nk=m$, and $n\mid m$. Either statement contradicts $\gcd(n,m)=1$. \qed
\end{proof}

Thus, if $G$ is not infinite cyclic, we can recognise the identity. In the case
where $G=\mathbb{Z}$, the identity and the two generators are indistinguishable
in the power graph, and are permuted transitively by its automorphism group,
so we can choose any one to be the identity and delete the edges containing it
to get a graph isomorphic to the power graph as defined in this paper. 

So our theorems would be essentially unaffected by changing the definition.
We use the definition given because it makes some of the arguments simpler.

\section{Preliminary results}

We collect here a few lemmas of general use.

\begin{lemma}\label{order}
Let $G$ be a group with $P(G)$ having exactly one isolated vertex. Then $G$ is
torsion-free.
\end{lemma}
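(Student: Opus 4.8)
The plan is to pin down exactly which vertices of $P(G)$ can possibly be isolated, and then to read off the torsion condition from the identity. The key observation I would establish first is that in fact \emph{only} the identity can ever be an isolated vertex of $P(G)$. To see this, take any $x\in G$ with $x\neq 1$ and split into two cases according to the order of $x$. If $x$ has infinite order, then $x^2\neq x$ and $x\sim x^2$, since $x^2$ is a power of $x$; so $x$ is not isolated. If instead $x$ has finite order $k$, then $k\geq 2$ because $x\neq 1$, and $x^k=1\neq x$, so $x\sim 1$ and again $x$ is not isolated. Hence every non-identity vertex has a neighbour.

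Given this, the hypothesis that $P(G)$ has exactly one isolated vertex forces that vertex to be the identity. Indeed, the observation above shows that at most one vertex, namely the identity $1$, can ever be isolated, so the hypothesis amounts precisely to the assertion that $1$ is isolated in $P(G)$.

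The final step is to translate ``$1$ is isolated'' into the torsion-free condition. For $y\neq 1$ we have $1\sim y$ exactly when $1$ is a power of $y$ (the alternative $y=1^n=1$ being impossible), that is, exactly when $y$ has finite order. Therefore $1$ is isolated if and only if $G$ has no non-identity element of finite order, which is exactly the statement that $G$ is torsion-free. Combining this with the previous step gives the conclusion.

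There is essentially no serious obstacle here; the only point needing a moment's care is the case division on the order of a non-identity element, ensuring that a finite-order element is linked to the identity and an infinite-order element is linked to its square, so that neither can be isolated. It is worth remarking that the argument actually shows the hypothesis ``exactly one isolated vertex'' can be weakened to ``at least one isolated vertex'', since we prove along the way that no vertex other than the identity is ever isolated.
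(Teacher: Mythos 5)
Your proof is correct and takes essentially the same approach as the paper: show that no non-identity vertex can ever be isolated, so the unique isolated vertex must be the identity, and then observe that the identity is isolated exactly when $G$ is torsion-free. The only cosmetic differences are your case split on finite versus infinite order (the paper splits on whether $a=a^{-1}$, joining $a$ to $1$ or to $a^{-1}$ accordingly) and your making explicit the final equivalence that the paper leaves implicit.
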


\begin{proof}
Since $P(G)$ has a unique isolated vertex, it suffices then to show that this
must be the identity of $G$. Let $a \in G$ be non-identity. If $a = a^{-1}$,
we have $a^2 = 1_G$, so $a$ is not the isolated vertex. On the other hand, if
$a \neq a^{-1}$, then $a$ and $a^{-1}$ are joined, so again $a$ is not isolated.
\qed
\end{proof}

For $a, b \in G$ define
\[ S_{a, b} := \{c \in G: c \sim b \hbox{ and }c \not\sim a\}.\]

\begin{lemma}\label{inverse}
Let $G$ be a group with $P(G)$ having exactly one isolated vertex, and suppose
that $a,b\in G$ with $a \sim b$. Then
$S_{a,b} = S_{b,a} = \varnothing$ if and only if $a = b^{\pm 1}$.
\end{lemma}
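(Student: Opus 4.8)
The plan is to exploit Lemma~\ref{order}, which tells us that $G$ is torsion-free, so that every non-identity element has infinite order and the identity $1$ is the unique isolated vertex. First I would dispose of the degenerate case: if either of $a,b$ equals $1$, then since $a\sim b$ and an isolated vertex has no neighbours, we are forced to have $a=b=1$, and the conclusion $a=b^{\pm1}$ holds trivially. So from now on I assume that $a$ and $b$ both have infinite order. Reading the definition of $S_{a,b}$, the hypothesis $S_{a,b}=\varnothing$ says exactly that every neighbour of $b$ is also a neighbour of $a$; hence $S_{a,b}=S_{b,a}=\varnothing$ is the statement that $a$ and $b$ have the same neighbourhood in $P(G)$.

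For the sufficiency direction I would argue that if $a=b^{\pm1}$ then $\langle a\rangle=\langle b\rangle$. Since $c\sim b$ means that $c$ is a power of $b$ or $b$ is a power of $c$, and both conditions depend only on the cyclic subgroup generated by $b$ (the first says $c\in\langle b\rangle$, the second says $b\in\langle c\rangle$), equality of these cyclic subgroups gives $c\sim a\iff c\sim b$ for every $c$; hence both $S_{a,b}$ and $S_{b,a}$ are empty.

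The necessity direction is the substantive part, and the hard part will be producing an explicit vertex that separates $a$ from $b$ when they are not inverses of one another. Since $a\sim b$, after possibly interchanging $a$ and $b$ I may assume $b=a^n$ for some $n\neq0$, and I want to show that $|n|=1$. The key computation, valid precisely because $a$ has infinite order, is that for an integer $m$ we have $a^m\sim b=a^n$ if and only if $m\mid n$ or $n\mid m$. Supposing for contradiction that $|n|\ge 2$, I would choose a prime $p$ not dividing $n$ (possible, since $n$ has only finitely many prime divisors). Then $a^p$ is a power of $a$, so $a^p\sim a$; but $p\nmid n$ by choice, and $n\nmid p$ because $|n|\ge2$ while the only divisors of the prime $p$ are $\pm1$ and $\pm p$ (and $|n|=p$ is excluded as it would force $p\mid n$). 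By the computation, $a^p\not\sim b$, so $a^p\in S_{b,a}$, contradicting $S_{b,a}=\varnothing$. Hence $|n|=1$, that is, $a=b^{\pm1}$. I expect the only real care needed is in the divisibility characterisation of adjacency among the powers of $a$, which rests on torsion-freeness, and in the small case $|n|=2$, where the chosen prime must simply be taken different from $2$.
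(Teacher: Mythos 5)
Your proof is correct and takes essentially the same approach as the paper: after handling the identity case via Lemma~\ref{order}, you produce a separating vertex that is a power of $a$ with exponent coprime to $n$ and show it is adjacent to $a$ but not to $b$ by exponent cancellation in a torsion-free group, which is exactly the paper's argument. The only differences are cosmetic: the paper takes any $j>1$ coprime to $m$ where you take a prime $p \nmid n$, and it treats the two cases $a=b^m$ and $a^m=b$ by ``a symmetric argument'' where you invoke the symmetry of the hypothesis and conclusion to interchange $a$ and $b$ at the outset.
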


\begin{proof}
($\Leftarrow$) Observe that $a= b^{\pm 1}$ implies that $a$ and $b$ have the
same neighbours in the power graph.

($\Rightarrow$) We have that $a \sim b$. If either is the identity then so is
the other, so we are done. Hence we have that $a$ and $b$ are non-identity.

Suppose first that $a = b^m$ for some $m \in \mathbb{Z}$.
If $|m| > 1$, then choose $j > 1$
such that $\gcd(j, m) =1$. We claim $b^j \not\sim a$. Indeed, suppose that
$a = b^{jt}$, for some $t \in \mathbb{Z}$. Then $b^m = b^{jt}$, so by
Lemma~\ref{order} and our earlier remark, we deduce that $m = jt$, so
$\gcd(j,m) = j > 1$, a contradiction.

Otherwise, suppose $b^j = a^t$ for some $t \in \mathbb{Z}$. We deduce that
$b^j = b^{mt}$, so by Lemma~\ref{order} we have $j = mt$, so
$\gcd(j,m) = |m| > 1$, also a contradiction. 

Thus, $b^j \not\sim a$, and since there are infinitely many choices for $j$, all giving pairwise distinct elements $b^j$, we have that $S_{a,b}$ is infinite.
Therefore we must have $|m| = 1$, so $a = b^{\pm 1}$.

Similarly, if $a^m = b$
for some $m \in \mathbb{Z}$, a symmetric argument shows $ m = \pm 1$. \qed
\end{proof}

Note that the proof shows that, if $b\to a$ but $a\not\to b$, then $S_{a,b}$ is
infinite.

\medskip

From now on, when dealing with a torsion-free group, we will use without
mention the result above: we can always recognise inverse elements in the
power graph of the group.

We conclude this section with a couple more results which will be needed later.

\begin{lemma}\label{Q}
Let $G$ be a torsion-free group and $x$ a non-identity element of $G$. Then
the induced subgraph of $P(G)'$ on $O(x)=O_G(x)$ is a connected
subgraph of $P(G)'$, and there are no edges between 
$I(x)$ and $O(x)$ in $P(G)'$.
\end{lemma}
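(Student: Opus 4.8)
The plan is to reinterpret both assertions in terms of the containment order on the nontrivial cyclic subgroups of $G$, which is the information retained by $P(G)'$ once inverse pairs have been identified (Lemma~\ref{inverse}): adjacency in $P(G)'$ corresponds to a covering relation, one cyclic subgroup sitting immediately inside another. For a non-identity $x$, the set $O(x)$ consists of the powers $x^n$ with $n\neq 0$, all lying in $\langle x\rangle$ and corresponding to the cyclic subgroups $\langle x^n\rangle\subsetneq\langle x\rangle$, while $I(x)$ consists of the $y$ with $x\in\langle y\rangle$, i.e.\ those with $\langle x\rangle\subsetneq\langle y\rangle$. Thus $O(x)$ sits below $\langle x\rangle$ and $I(x)$ above it, and the two claims become a connectivity statement for the down-set of $\langle x\rangle$ and a separation statement across $\langle x\rangle$.

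The separation is immediate: for $y\in I(x)$ and $z\in O(x)$ we have $\langle z\rangle\subsetneq\langle x\rangle\subsetneq\langle y\rangle$, so $\langle x\rangle$ is a cyclic subgroup lying strictly between $\langle z\rangle$ and $\langle y\rangle$; hence neither generator covers the other, and there is no edge of $P(G)'$ joining them. Here $x^{\pm 1}$ must be discarded from both $O(x)$ and $I(x)$, which is exactly where Lemma~\ref{inverse} is used, to make the two sets disjoint. For the connectivity of $O(x)$, the key observation is that every cyclic subgroup of $G$ contained in $\langle x\rangle$ is generated by a power of $x$; consequently no cyclic subgroup can lie strictly between two powers of $x$ without itself being a power of $x$, and the subgraph induced on $O(x)$ is precisely the covering graph of the lattice of proper nontrivial subgroups of $\langle x\rangle\cong\mathbb{Z}$, i.e.\ the integers $\geq 2$ under divisibility. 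To connect $x^a$ and $x^b$ with $|a|,|b|\geq 2$ I would route through $x^{ab}$: stepping from $x^a$ to $x^{ab}$ by multiplying the exponent by the prime factors of $b$ one at a time multiplies the subgroup index by a prime at each step, so each step is a covering edge lying inside $O(x)$, and symmetrically from $x^b$.

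I expect the connectivity claim to be the main obstacle, since it is where one must check that the down-set of $\langle x\rangle$ really is a faithful copy of the subgroup lattice of $\mathbb{Z}$ (there are no stray intermediate cyclic subgroups coming from outside $\langle x\rangle$) and then supply the elementary number theory showing that the divisibility Hasse diagram is connected; the separation claim needs nothing beyond the order picture once it is set up. The only other point requiring care is the bookkeeping around inverses, so that $O(x)$ and $I(x)$ are treated as collections of cyclic subgroups rather than of individual elements.
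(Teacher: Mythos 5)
There is a genuine gap, and it stems from a misreading of the notation: in this paper $P(G)'$ denotes the \emph{complement} of the power graph (this is explicit in the paper's proof of Lemma~\ref{Q} and in the parenthetical remark inside Lemma~\ref{isom_nbrs}), so its edges are exactly the pairs in which \emph{neither} element is a power of the other --- incomparable pairs --- not covering pairs of the containment order on cyclic subgroups. Your entire construction rests on the covering-graph (Hasse diagram) reading, and under the correct reading the connectivity argument fails outright: each step of your proposed path from $x^a$ to $x^{ab}$ multiplies the exponent by a prime, so consecutive vertices such as $x^a$ and $x^{ap_1}$ satisfy $x^{ap_1}=(x^a)^{p_1}$; they are adjacent in $P(G)$ and therefore \emph{non}-adjacent in $P(G)'$. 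Concretely, your ``covering edge'' from $x^2$ to $x^6$ is a non-edge of $P(G)'$, and the whole path lies outside the graph whose connectivity is being asserted. The paper's proof uses the opposite trick: given $x^m,x^n\in O(x)$, choose a prime $p$ dividing neither $m$ nor $n$; since $G$ is torsion-free, exponents are unique, so $x^p$ is comparable with neither $x^m$ nor $x^n$ and hence is a common neighbour of both in $P(G)'$, putting any two vertices of $O(x)$ at distance at most $2$ in the complement. Connectivity here requires common \emph{non}-powers, not divisibility chains.

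The separation half of your argument reaches the right conclusion, but by an invalid inference: ``neither generator covers the other, hence no edge of $P(G)'$'' is false under the correct semantics --- incomparable non-covering pairs such as $(x^2,x^3)$ are precisely the edges of $P(G)'$. What saves that half is that your containment chain $\langle z\rangle\subseteq\langle x\rangle\subseteq\langle y\rangle$ already shows $z$ is a power of $y$, i.e.\ $y\sim z$ in $P(G)$, so the pair is a non-edge of the complement; this is exactly the paper's one-line argument ($y\to x\to z$ implies $y\to z$). Your bookkeeping about discarding $x^{\pm1}$ is in fact a sound instinct --- $x^{-1}$ is adjacent in $P(G)$ to every element of $N(x)$, so it is an isolated vertex of $N(x)'$, an edge case the paper itself glosses over --- but it does not repair the connectivity argument, which needs to be rebuilt on the complement reading.
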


\begin{proof}
Let $x^m$ and $x^n$ be in $O(x)$. Then if $p$ is a prime dividing neither $m$
nor $n$, then $x^p$ is joined to both $x^m$ and $x^n$ in the complement of the
power graph.

Finally, if we have $y \to x \to z$, then $y \to z$. Thus, no in-neighbour is 
connected to an out-neighbour in the complement of the power graph. \qed
\end{proof}

\begin{lemma}\label{isom_nbrs}
Let $G$ be a torsion-free group and $H$ be a group with $P(G) \cong P(H)$.
Fix $z \in G$ such that $z \neq 1_G$ and let $f$ be an isomorphism
$f: P(G) \to P(H)$. Then $f$ induces an isomorphism from each connected
component of $P(G)'$ in $N_G(z)$ to a connected component of $P(H)'$ in
$N_H(f(z))$. Furthermore, $I_G(z) \cong I_H(f(z))$ and $O_G(z) \cong O_H(f(z))$.
\end{lemma}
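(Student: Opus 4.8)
The plan is to pin down the connected-component structure of $P(G)'$ on $N_G(z)$ using Lemma~\ref{Q}, transport it across $f$, and read off the two isomorphisms.

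First I would check that the hypotheses let me apply Lemma~\ref{Q} on both sides. In a torsion-free group the identity is the unique isolated vertex of the power graph (a non-identity $y$ is joined to $y^{-1}\ne y$, whereas $1_G$ is joined to nothing), so $P(H)\cong P(G)$ has a unique isolated vertex and Lemma~\ref{order} gives that $H$ is torsion-free; moreover $f$ must carry the isolated vertex of $P(G)$ to that of $P(H)$, so $f(1_G)=1_H$ and hence $f(z)\ne 1_H$. I would also record that $f$ respects inverses, $f(z^{-1})=f(z)^{-1}$, since $z^{-1}$ is the unique vertex $\ne z$ with $S_{z,z^{-1}}=S_{z^{-1},z}=\varnothing$ by Lemma~\ref{inverse}. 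For the first assertion itself, note that $f$ is simultaneously an isomorphism $P(G)'\to P(H)'$, and $y\sim z\Leftrightarrow f(y)\sim f(z)$ shows $f(N_G(z))=N_H(f(z))$; thus $f$ restricts to an isomorphism of the induced subgraphs of $P(G)'$ and $P(H)'$ on these neighbourhoods, and any isomorphism sends connected components to connected components.

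Next I would describe those components. The inverse $z^{-1}$ is the only neighbour of $z$ belonging to both $O_G(z)$ and $I_G(z)$, and it is joined in $P(G)$ to every other neighbour of $z$ (it has the same neighbours as $z$), so it is isolated in $P(G)'$ on $N_G(z)$. Writing $B_G=O_G(z)\setminus\{z^{-1}\}=\{z^n:|n|\ge 2\}$, the argument of Lemma~\ref{Q} (given $z^m,z^n$, a prime $p\ge 2$ dividing neither $m$ nor $n$ yields $z^p\in B_G$ joined to both in $P(G)'$) shows $B_G$ is connected in $P(G)'$; and Lemma~\ref{Q} forbids $P(G)'$-edges between $O_G(z)$ and $I_G(z)$, so $B_G$ is a full connected component. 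Hence, with $A_G=I_G(z)\setminus\{z^{-1}\}$, the multiset of components of $P(G)'$ on $N_G(z)$ is $\{B_G\}\uplus\{\{z^{-1}\}\}\uplus\mathrm{comp}(A_G)$, and similarly on the $H$ side with $B_H$, $A_H$ and $f(z)^{-1}$.

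Now the two isomorphisms. The induced subgraph on $O_G(z)=\{z^n:n\ne 0\}$ is, via $z^n\mapsto n$, isomorphic to the fixed ``divisibility graph'' on $\mathbb{Z}\setminus\{0\}$ in which $m,n$ are joined iff one divides the other, because in a torsion-free group $z^m\sim z^n$ iff $m\mid n$ or $n\mid m$; the same holds for $O_H(f(z))$, so $O_G(z)\cong O_H(f(z))$ with no further work. For the in-neighbourhoods I would exploit the first part: $f$ gives a type-preserving bijection of the two component multisets and pairs $\{z^{-1}\}$ with $\{f(z)^{-1}\}$, so deleting these leaves $\{B_G\}\uplus\mathrm{comp}(A_G)\cong\{B_H\}\uplus\mathrm{comp}(A_H)$ as multisets of isomorphism types. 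Since $B_G\cong B_H$ (again a divisibility graph, now on $\{n:|n|\ge 2\}$), I can cancel one copy to get $\mathrm{comp}(A_G)\cong\mathrm{comp}(A_H)$, hence $A_G\cong A_H$; restoring the isolated vertex gives $I_G(z)=A_G\sqcup\{z^{-1}\}\cong A_H\sqcup\{f(z)^{-1}\}=I_H(f(z))$. The step I expect to be the main obstacle is exactly this last one, because $f$ need \emph{not} send $O_G(z)$ to $O_H(f(z))$: an orientation-reversing isomorphism (such as exists for $\mathbb{Q}$) maps out-neighbours to in-neighbours, so one cannot simply ``subtract $O$ from $N$'' to identify $I$. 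What rescues the argument is that the out-blob $B_\bullet$ has an isomorphism type independent of the group and of $z$; this both yields $O_G(z)\cong O_H(f(z))$ outright and supplies the cancellable common term in the multiset identity that recovers $I_G(z)\cong I_H(f(z))$.
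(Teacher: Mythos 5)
Your proof is correct and takes essentially the same route as the paper's: transport the connected components of the complement on $N_G(z)$ via $f$, observe that the out-neighbourhood has a fixed isomorphism type ($P(\mathbb{Z})\setminus\{0\}$, i.e.\ the divisibility graph), and dispose of the case where $f$ swaps this component with a component of $I_H(f(z))'$ by cancelling that common fixed type (the paper's ``second case''). As a small bonus, by splitting off $z^{-1}$ as an isolated vertex of $N_G(z)'$ and working with $B_G=\{z^n:|n|\ge 2\}$ you quietly repair a minor imprecision in Lemma~\ref{Q} --- literally read, $P(G)'$ restricted to $O(x)$ is connected only after removing $x^{\pm 1}$, which are complement-isolated --- a point the paper's proof glosses over.
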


\begin{proof}
By Lemma~\ref{order}, $H$ is torsion-free and $f(z) \neq 1_H$. The lemma will
follow from the next result:

\paragraph{Claim} For all connected components $C$ of $N(z)'$ and $x,y \in C$,
$f(x)$ and $f(y)$ belong to the same connected component $D$ of $N(f(z))'$.
(Here we usee $N(z)'$ for the induced subgraph of $P(G)'$ on $N(z)$.)

To verify the claim, suppose that there exists a connected component $C$ of
$N(z)'$ and $x,y \in C$ such that $f(x) \in D_1$ and $f(y) \in D_2$, where
$D_1$ and $D_2$ are different connected components of $N(f(z))'$. Then there
exists a path $(x_0=x,x_1,\ldots,x_n=y)$ in $C$. As $f$ is an isomorphism from
$P(G)$ to $P(H)$, it follows that there is a path
$(f(x_0=x),f(x_1),\ldots,f(x_n=y))$ in $P(H)'$ and $f(N(z))=N(f(z))$. Hence,
for all $i \in \{0,1,\ldots,n\}$, we have $f(x_i) \in N(f(z))'$. But this is a
contradiction as $f(x)$ and $f(y)$  belong to different connected components
of $N(f(z))'$.

We are ready to prove the lemma. As $G$ and $H$ are torsion-free, we have
$O(x)=\{x^n:n\in\mathbb{Z}\setminus\{0\}\}$, and so
$O(f(x))\cong P(\mathbb{Z})\setminus\{0\}\cong O(x)$. Furthermore, as $f$ is
an isomorphism, using our claim, we deduce that $f$ induces an isomorphism from
each connected component of $N(z)'$ to a connected component of $N(f(z))'$. It
remains to show that $I(z) \cong I(f(z))$. By Lemma~\ref{Q} we have to consider
two cases, either $f(O(z))=O(f(z))$ or $f(O(z))=D$, where $D$
is a connected component of $I(f(z))'$. In the first case, $f$ induces an
isomorphism from each connected component of $I(z)'$ to a connected component of $I(f(z))'$. In the second case, using our results, there exists a connected
component $C$ of $I(z)'$ such that $f(C)=O(f(z))'$. Hence there are two
connected components in both $N(z)'$ and $N(f(z))'$ isomorphic to
$P(\mathbb{Z})\setminus\{0\}$. Then $f$ induces an isomorphism from each of
the remaining connected components of $I(z)'$ to one of the remaining
connected components of $I(f(z))'$. Putting this together, we deduce that in
both cases $I(z) \cong I(f(z))$. \qed
\end{proof}

\section{The group $\mathbb{Z}$}

In this section we examine the power graph of $\mathbb{Z}$.

\begin{lemma}\label{finite}
Let $a, b \in \mathbb{Z}$ be such that $a \sim b$ and $a \neq \pm b$. Then 
$a \to b$ if and only if $S_{a, b}$ is finite.
\label{l41}
\end{lemma}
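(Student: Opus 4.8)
The plan is to translate everything into the language of divisibility in $\mathbb{Z}$ and then reduce the statement to two short observations. In the additive group $\mathbb{Z}$, the relation $a\to b$ means that $b$ is a nonzero integer multiple of $a$, i.e. $a\mid b$; and $a\sim b$ means $a\mid b$ or $b\mid a$. Since $a\sim b$ but $a\neq\pm b$, exactly one of $a\mid b$ and $b\mid a$ can hold (if both held we would have $a=\pm b$), and at least one holds because $a\sim b$; moreover neither $a$ nor $b$ equals $0$, since $0$ is the unique isolated vertex of $P(\mathbb{Z})$ while $a\sim b$. Thus exactly one of $a\to b$, $b\to a$ is true, and the lemma amounts to matching the case $a\to b$ with finiteness of $S_{a,b}$ and the complementary case $b\to a$ with infiniteness.

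For the forward implication I would argue directly that $a\to b$ forces $S_{a,b}$ to consist only of divisors of $b$. Suppose $a\mid b$ and let $c\in S_{a,b}$, so $c\sim b$ and $c\not\sim a$. From $c\sim b$ we get $c\mid b$ or $b\mid c$; but $b\mid c$ together with $a\mid b$ would give $a\mid c$, hence $a\to c$ and so $c\sim a$, contradicting $c\not\sim a$. Therefore every $c\in S_{a,b}$ divides $b$. Since $b\neq 0$ has only finitely many divisors, $S_{a,b}$ is finite. This is the cleanest step and carries the real content of one direction.

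For the reverse implication I would use the contrapositive together with the remark following Lemma~\ref{inverse}. If $a\not\to b$, then by the trichotomy above we must have $b\to a$ while $a\not\to b$, and that remark immediately yields that $S_{a,b}$ is infinite; equivalently, $S_{a,b}$ finite implies $a\to b$. If one prefers a self-contained argument, one can instead exhibit infinitely many witnesses explicitly: writing $a=mb$ with $|m|>1$, the elements $c=pb$ for primes $p$ satisfy $b\mid c$ (so $c\sim b$), while for all but finitely many primes $p$ we have $p\nmid m$ and $p\neq|m|$, which rules out both $c\mid a$ and $a\mid c$ and hence gives $c\not\sim a$; distinct primes give distinct $c$.

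I do not expect a genuine obstacle here. The only points requiring a little care are the trichotomy — ensuring that exactly one of the two divisibilities holds, which is precisely where the hypothesis $a\neq\pm b$ enters — and, in the explicit version of the reverse direction, checking that $c=pb$ is incomparable to $a$ for cofinitely many primes $p$. Both are routine once the problem has been phrased in terms of divisibility.
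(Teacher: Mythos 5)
Your proof is correct and takes essentially the same route as the paper's: the forward direction via observing that elements of $S_{a,b}$ must be divisors of $b\neq 0$ (since any multiple of $b$ is a multiple of $a$), and the reverse direction via the contrapositive together with the remark following Lemma~\ref{inverse}. Your optional self-contained witness construction ($c=pb$ for suitable primes $p$) simply reproduces the argument already inside the proof of Lemma~\ref{inverse}, so it adds no genuinely different content.
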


\begin{proof}
($\Rightarrow$) Notice that $b$ is only divisible by finitely many
$c \in \mathbb{Z}$. On the other hand, if $b$ divides $x$ then $a$ divides $x$.
Thus there are at most finitely many vertices that are connected to $b$ but
not to $a$, that is, $S_{a,b}$ is finite.

($\Leftarrow$) By Lemma~\ref{finite}, we know that if $a\not\to b$, then 
$b\to a$, so $S_{a,b}$ is infinite. \qed
\end{proof}

This shows that the undirected power graph of $\mathbb{Z}$ determines the
directed power graph, by the rule in the Lemma. Using this, we prove
Theorem~\ref{t0}.

\paragraph{Proof of Theorem~\ref{t0}} Let $G=\mathbb{Z}$. We note first that
$P(H)$ has an isolated vertex, so $H$ is torsion-free.
Putting Lemma~\ref{inverse} and Lemma~\ref{finite} together we observe that if
$a \sim b$, $a \neq b$ in $P(G)$, then one of the following holds:
  \begin{enumerate}\itemsep0pt
  \item $S_{a,b} = S_{b,a} = \varnothing$;
  \item one of $S_{a,b}$ or $S_{b,a}$ is finite and the other is infinite.
  \end{enumerate}
Therefore the same holds in $P(H)$. So consider $a \sim b$ in $P(H)$ with
$a \neq b$. If we are in the first case, then, by Lemma~\ref{inverse}, we
deduce that $a = b^{-1}$, so $\vec{P}(H)$ has directed arrows going in both
directions. But the same is true in $\vec{P}(G)$, so for all these cases the
directions agree. If we are in the second case, say $S_{a,b}$ is finite and
$S_{b,a}$ is infinite, then the corresponding elements of $G$, say $a'$ and
$b'$, have a directed edge in $\vec{P}(G)$ going from $a'$ to $b'$, but not
the other way around. Suppose that the direction in $\vec{P}(H)$ was reversed,
so that $a = b^m$ for some $m \in \mathbb{Z}$. The argument in
Lemma~\ref{inverse} shows that $ |m| > 1$ implies that $S_{a,b}$ is infinite,
contrary to our assumption. Thus we must have $a = b^{-1}$, but this is also
contrary to our assumption that $S_{b,a}$ is infinite. Thus we must have the
directions agreeing in the power graph of $G$ for the second case as well.

Now, if $G=\langle a\rangle$, then there is a directed
arrow from $a$ to every other element of $G$ except the identity. So $H$ has
such a vertex also, and $H$ is an infinite cyclic group, as needed. \qed

\paragraph{Remark} If we had used the alternative definition of the power
graph, where $x$ is joined also to $x^0=1$, then it is false that any 
isomorphism of the power graph induces an isomorphism of the directed power
graph, since as noted earlier the identity and the two generators are 
indistinguishable in the power graph. We can conclude that, with this
definition, if $P(H)\cong P(\mathbb{Z})$, then
$\vec{P}(H)\cong\vec{P}(\mathbb{Z})$.

\section{Groups with the same power graph as $\mathbb{Z}^n$}

One may be tempted to conjecture that, for all $n \in \mathbb{N}$, the power
graph of $\mathbb{Z}^n$ determines $\mathbb{Z}^n$ up to isomorphism, as we
showed was true for $\mathbb{Z}$ in Theorem~\ref{t0}. However, this is
not the case. In fact, we will prove that, for $n > 1$, all the groups
$\mathbb{Z}^n$ have isomorphic power graphs.

In this section we are interested in a wider class of groups, namely those
with the following property $(*)$:
\begin{quote}\it Every non-identity element is contained in a unique
maximal cyclic subgroup.
\end{quote}
We begin with a few remarks about this class. First,
observe that the property is equivalent to saying that the non-identity
elements of the group are partitioned into maximal cyclic subgroups. Hence:

\begin{prop}
Let $G$ be a torsion-free group in which every non-identity element is
contained in a unique maximal cyclic subgroup. Then the power graph of $G$ 
is the disjoint union of an isolated vertex (the identity) and a number of
copies of $P(\mathbb{Z})\setminus\{1_{\mathbb{Z}}\}$.
\end{prop}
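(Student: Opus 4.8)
The plan is to show that the edges of $P(G)$ never cross between distinct maximal cyclic subgroups, and then to identify each resulting piece as a copy of $P(\mathbb{Z})\setminus\{1_{\mathbb{Z}}\}$. First I would dispose of the identity: since $G$ is torsion-free, $x^n=1_G$ with $n\neq 0$ forces $x=1_G$, and $1_G=x^n$ is impossible for $x\neq 1_G$; hence $1_G$ is joined to no other vertex and is an isolated vertex of $P(G)$. By the remark preceding the statement, the remaining vertices $G\setminus\{1_G\}$ are partitioned into the non-identity parts of the maximal cyclic subgroups of $G$, each of which is a nontrivial cyclic subgroup of a torsion-free group and hence isomorphic to $\mathbb{Z}$.

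The crux is the claim that there is no edge of $P(G)$ joining two non-identity elements lying in different maximal cyclic subgroups. To see this, suppose $x\sim y$ with $x,y\neq 1_G$; without loss of generality $y=x^n$ for some nonzero $n$, so $y\in\langle x\rangle$. Let $M$ be the unique maximal cyclic subgroup containing $x$; then $\langle x\rangle\subseteq M$, so $y\in M$ as well. Since $M$ is a maximal cyclic subgroup containing $y$, uniqueness forces $M$ to be exactly the maximal cyclic subgroup of $y$. Thus $x$ and $y$ lie in the same part of the partition. It follows that $P(G)$ is the disjoint union of the isolated vertex $1_G$ together with the induced subgraphs on the sets $M\setminus\{1_G\}$, as $M$ ranges over the maximal cyclic subgroups of $G$.

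It then remains to identify each such piece. Because $M$ is a subgroup, for $u,v\in M$ the relation ``one is a power of the other in $G$'' coincides with ``one is a power of the other in $M$''; hence the induced subgraph of $P(G)$ on $M$ is exactly $P(M)$. Any group isomorphism $M\to\mathbb{Z}$ is at the same time an isomorphism of the associated power graphs, so $P(M)\cong P(\mathbb{Z})$, and deleting the isolated identity vertex gives an isomorphism from the induced subgraph on $M\setminus\{1_G\}$ to $P(\mathbb{Z})\setminus\{1_{\mathbb{Z}}\}$. Assembling these isomorphisms over all parts of the partition yields the asserted description of $P(G)$. I expect the only genuinely load-bearing step to be the crossing-edge claim, where the uniqueness hypothesis in $(*)$ is used in an essential way; the remaining steps are an unwinding of the definitions.
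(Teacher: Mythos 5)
Your proof is correct and takes essentially the same route as the paper, which deduces the proposition directly from the observation that property $(*)$ partitions the non-identity elements of $G$ into the maximal cyclic subgroups (the paper in fact states the result without further proof). You merely make explicit the details the paper leaves implicit: the identity is isolated by torsion-freeness, uniqueness of the maximal cyclic subgroup prevents any edge from crossing between parts, and each part induces a copy of $P(\mathbb{Z})\setminus\{1_{\mathbb{Z}}\}$.
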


How many connected components are there? This is answered by the next result.

\begin{prop}
Let $G$ be a torsion-free group in which every non-identity element lies in
a unique maximal infinite cyclic subgroup. Then the number of maximal infinite
cyclic subgroups is either $1$ or infinite. In particular, if $G$ is countable
but not isomorphic to $\mathbb{Z}$, then the number of such subgroups is
countably infinite.
\end{prop}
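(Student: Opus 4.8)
The plan is to work entirely with the partition of $G$ supplied by property $(*)$, and never to touch the ambient algebraic structure beyond that. Write $\{C_i\}_{i\in\Lambda}$ for the maximal (infinite) cyclic subgroups of $G$; since every non-identity element lies in exactly one of them, we have $G=\bigcup_{i\in\Lambda}C_i$ together with $C_i\cap C_j=\{1_G\}$ for all $i\ne j$ (a shared non-identity element would lie in two maximal cyclic subgroups). These two facts — a cover by subgroups that are pairwise trivially intersecting — are all I expect to need, and $\Lambda$ is just the index set of the non-trivial connected components of $P(G)$. The entire statement concerns $|\Lambda|$, so I would record these facts first and then handle the dichotomy and the countable refinement in turn, assuming throughout that $G\ne\{1_G\}$ so that $|\Lambda|\ge 1$.

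For the dichotomy I would argue by contradiction, supposing $2\le|\Lambda|<\infty$. As two distinct subgroups cannot both be all of $G$, at least one of them, say $C_1=\langle a\rangle$, is proper; I fix some $g\in G\setminus C_1$ and examine the coset $gC_1=\{ga^n:n\in\mathbb{Z}\}$. This coset is disjoint from $C_1$, so every one of its (distinct, infinitely many) elements lies in some $C_i$ with $i\ne 1$. Since only finitely many such subgroups are available, the pigeonhole principle produces one $C_j$ with $j\ne 1$ and two integers $n_1\ne n_2$ with $ga^{n_1},ga^{n_2}\in C_j$. Then $a^{n_2-n_1}=(ga^{n_1})^{-1}(ga^{n_2})\in C_j$, while also $a^{n_2-n_1}\in C_1$, so $a^{n_2-n_1}\in C_1\cap C_j=\{1_G\}$; torsion-freeness now forces $n_1=n_2$, a contradiction. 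Hence $|\Lambda|$ is $1$ or infinite. I expect this coset–pigeonhole step to be the crux of the whole proof, and it is precisely where torsion-freeness is indispensable: in the presence of torsion a group can be a finite union of pairwise trivially intersecting maximal cyclic subgroups (for instance the Klein four-group, which is the union of three of them).

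For the last sentence I would combine the dichotomy with a cardinality count. The sets $C_i\setminus\{1_G\}$ are nonempty and pairwise disjoint subsets of $G\setminus\{1_G\}$, so $|\Lambda|\le|G\setminus\{1_G\}|$; when $G$ is countable this gives $|\Lambda|\le\aleph_0$. If in addition $G\not\cong\mathbb{Z}$, then $|\Lambda|\ne 1$, since a single maximal cyclic subgroup would have to be all of $G$ (by the cover $G=\bigcup_{i\in\Lambda}C_i$) and hence make $G$ infinite cyclic. The dichotomy then leaves only the possibility that $|\Lambda|$ is infinite, so $|\Lambda|=\aleph_0$, as claimed. I anticipate no real difficulty in this final paragraph; the work is all in the pigeonhole argument above.
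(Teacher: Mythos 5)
Your proof is correct, but it takes a genuinely different and more elementary route than the paper's. The paper argues as follows: if the maximal cyclic subgroups were finite in number, with generators $a_1,\ldots,a_k$, then $G=\langle a_1,\ldots,a_k\rangle$; conjugation permutes this finite set of generators, so the kernel of the action is a finite-index subgroup fixing each $a_i$, hence equal to the centre $Z(G)$, and in particular abelian and infinite; two elements of this kernel lying in distinct maximal cyclic subgroups then generate a copy of $\mathbb{Z}^2$, which cannot be covered by finitely many cyclic subgroups --- contradiction. Your coset--pigeonhole argument bypasses the conjugation action, the centre, and the structure of $\mathbb{Z}^2$ entirely: it uses only that the $C_i$ are pairwise trivially intersecting subgroups covering $G$, that some $C_1$ is proper, and that $C_1$ is infinite --- which is exactly where torsion-freeness enters, as your Klein four-group remark correctly isolates. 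What each buys: the paper's route reduces the problem to the abelian case, where the covering obstruction feels transparent, but at the cost of machinery and of leaving its final step (that $\mathbb{Z}^2$ is not a finite union of cyclic subgroups) asserted without proof; your pigeonhole computation is essentially the proof of precisely that kind of covering fact, applied directly in $G$, so your argument is both shorter and more self-contained. You also spell out the countability refinement (the pairwise disjoint nonempty sets $C_i\setminus\{1_G\}$ bound $|\Lambda|$ by $\aleph_0$, and $|\Lambda|=1$ forces $G\cong\mathbb{Z}$), which the paper's proof leaves implicit; both arguments harmlessly assume $G$ nontrivial.
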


\begin{proof}
Suppose, for a contradiction, that $a_1,\ldots,a_k$ are all the generators for
the maximal infinite cyclic subgroups of $G$, where $k>2$. (Each such subgroup
has two generators, which are inverses of each other.) By hypothesis,
$G=\langle a_1,\ldots,a_k\rangle$.

Now $G$ acts on itself by conjugation; this action must map the set
\[ \{a_1,\ldots,a_k\} \] into itself, and so induces a subgroup of the symmetric
group $S_k$ on this set. The kernel of this action is a subgroup $H$ of finite
index in $G$ which fixes all of $a_1,\ldots,a_k$; we see that $H$ is the
centre $Z(G)$ of $G$, and so $H$ is abelian. 

Now $H$ is an infinite abelian group which is partitioned by its intersections
with the maximal cyclic subgroups of $G$. But if $a$ and $b$ are elements of
$H$ belonging to distinct such subgroups, then
$\langle a,b\rangle\cong\mathbb{Z}^2$, and this group cannot be covered by
finitely many cyclic subgroups. \qed
\end{proof}

Which groups have this property? One class is given by the next result.

\begin{prop}\label{torsion}
Let $G$ be a torsion-free group of nilpotency class $2$, and suppose $a$ and
$b$ generate distinct maximal infinite cyclic subgroups. Then
$\langle a \rangle \cap \langle b \rangle = \{1\}$.
\end{prop}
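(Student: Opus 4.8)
The plan is to argue by contradiction. Suppose $\langle a\rangle\cap\langle b\rangle\ne\{1\}$. This intersection is a subgroup of the infinite cyclic group $\langle a\rangle$, so it is nontrivial cyclic and contains some $a^m\ne 1$ with $m\ne 0$; as this element also lies in $\langle b\rangle$, we may write $a^m=b^n$, and torsion-freeness forces $n\ne 0$ as well (otherwise $a^m=1$ with $m\ne0$, contradicting that $a\ne 1$ has infinite order). So I would start from a genuine nontrivial relation $a^m=b^n$ with $m,n\ne 0$.

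The crux is to use the nilpotency class $2$ hypothesis to upgrade this algebraic relation into \emph{commutativity} of $a$ and $b$. In a class $2$ group every commutator is central, which yields the standard identity $[x^k,y]=[x,y]^k$. Applying this to the relation $a^m=b^n$, I compute $[a,b]^m=[a^m,b]=[b^n,b]=1$, where the last equality holds because $b^n$ and $b$ commute. Since $G$ is torsion-free and $m\ne 0$, the element $[a,b]$ has finite order and must therefore be trivial; hence $a$ and $b$ commute. This commutator step is the main obstacle: it is the one place where the class $2$ assumption is genuinely used, and it is what rules out the kind of counterexamples one would expect in higher nilpotency class.

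Once commutativity is in hand, the remainder is routine. The subgroup $\langle a,b\rangle$ is now abelian, torsion-free, and finitely generated, and the relation $a^m=b^n$ makes $a$ and $b$ linearly dependent, so $\langle a,b\rangle$ has rank $1$; a finitely generated torsion-free abelian group of rank $1$ is infinite cyclic, say $\langle a,b\rangle=\langle c\rangle$. Then $\langle c\rangle$ is a cyclic subgroup of $G$ containing $\langle a\rangle$, so maximality of $\langle a\rangle$ gives $\langle a\rangle=\langle c\rangle$, and the same argument gives $\langle b\rangle=\langle c\rangle$. Hence $\langle a\rangle=\langle b\rangle$, contradicting the assumption that these are distinct maximal cyclic subgroups, and the proof is complete.
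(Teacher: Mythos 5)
Your proof is correct, and its crux coincides exactly with the paper's: both proofs use the class-$2$ identity $[x^k,y]=[x,y]^k$ (commutators being central) applied to the relation $a^m=b^n$ to get $[a,b]^m=1$, and then invoke torsion-freeness to conclude $[a,b]=1$. Where you diverge is the endgame, and the divergence is a genuine (if modest) improvement in economy. The paper, having made $\langle a,b\rangle$ abelian, runs a case analysis on its isomorphism type: it excludes $\mathbb{Z}\times C_k$ by torsion-freeness, excludes $\mathbb{Z}$ by maximality of $\langle a\rangle$ and $\langle b\rangle$, and excludes $\mathbb{Z}^2$ by an index computation ($\langle a,b\rangle/\langle a\rangle$ is finite since $b^n\in\langle a\rangle$, whereas $\mathbb{Z}^2$ modulo any cyclic subgroup is infinite), reaching a contradiction with no case surviving. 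You instead observe that the relation $a^m=b^n$ with $m,n\neq0$ bounds the rank by $1$, so the finitely generated torsion-free abelian group $\langle a,b\rangle$ is forced to be infinite cyclic, and only then deploy maximality — twice — to get $\langle a\rangle=\langle a,b\rangle=\langle b\rangle$, contradicting distinctness. So the two arguments use maximality in dual ways: the paper uses it to rule the cyclic case \emph{out}, while you use the rank argument to force the cyclic case \emph{in} and then use maximality to collapse $\langle a\rangle$ and $\langle b\rangle$ together. Your version avoids the index computation entirely and replaces the three-way case split by a single rank observation; the paper's version has the minor virtue of not needing the notion of rank, staying within the bare classification of two-generated abelian groups. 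One bookkeeping point worth making explicit in your write-up: rank exactly $1$ (rather than $0$) uses $a\neq1$, which holds because $\langle a\rangle$ is infinite cyclic.
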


\begin{proof}
Suppose that $\langle a \rangle \cap \langle b \rangle = \langle x \rangle$,
for some $x \in G$, $x \neq 1$. So $x = a^m = b^n$ for some $n,m\in\mathbb{Z}$. 
Then, since $a^m$ is a power of $b$, we have $1 = [a^m, b]$. Since
$[a_1a_2, b] = [a_1,b] [a_2, b]$ in a nilpotent group of class $2$, we have
$1 = [a, b]^m$. But, since $G$ is torsion-free, we have $[a,b] = 1$, so
$\langle a, b\rangle$ is abelian.

So this subgroup is equal to one of $\mathbb{Z}^2$, or $\mathbb{Z} \times C_k$
for some natural number $k$. It cannot be $\mathbb{Z} \times C_k$ for any
$k>1$, as then $G$ would not be torsion-free. Also, it cannot be $\mathbb{Z}$,
since $\langle a \rangle$ and $\langle b \rangle$ are maximal cyclic, so this
would force $\langle a, b \rangle =  \langle a \rangle  = \langle b \rangle$,
contrary to our assumption. Finally, we observe that
$\langle a, b \rangle / \langle a \rangle$ is finite,
but $\mathbb{Z}^2/\langle g\rangle$ is infinite for
any $g \in \mathbb{Z}^2$. (This is clear if $g$ is the identity, so suppose
not. Let $g=(m,n)$ where, without loss of generality, $n\ne0$. The elements
$(k,0)$ for $k\in\mathbb{Z}$ all lie in different cosets of $\langle g\rangle$.)
Hence $\langle a \rangle \cap \langle b \rangle = \{1\}$, as needed. \qed
\end{proof}

This shows, for example, that $\mathbb{Z}^n$ has property $(*)$ for finite $n$.
It is enough to prove that each non-identity element of $\mathbb{Z}^n$ lies in
a maximal cyclic subgroup. The element $(a_1,\ldots,a_n)$, with $a_i$ not all
zero, lies in the maximal cyclic subgroup
$\langle(a_1/d,\ldots,a_n/d)\rangle$, where $d=\gcd(a_1,\ldots,a_n)$.

Other groups with this property include free groups.

\paragraph{Remark} Consider the two conditions on a torsion-free group $G$:
\begin{enumerate}\itemsep0pt
\item every non-identity element lies in a maximal cyclic subgroup;
\item every non-identity element lies in a unique maximal cyclic subgroup.
\end{enumerate}
Now (a) does not imply (b) in general. For take the group generated by $a$ and
$b$ with the single defining relation $a^m=b^n$ where $m,n>1$. This group is a
\textit{free product with amalgamation} $A *_C B$, where $A$ and $B$ are the
groups generated by $a$ and $b$ respectively and $C$ is generated by $a^m=b^n$. The theory of such groups tells us~\cite{bhn}:
\begin{itemize}\itemsep0pt
\item $A$ and $B$ embed into $A *_C B$;
\item any element which is not in a conjugate of $A$ or $B$ has infinite order.
\end{itemize}
It follows that the group is torsion-free. Clearly the element $a^m=b^n$ lies
in two distinct maximal cyclic subgroups. On the other hand, by
Proposition~\ref{torsion}, in torsion-free abelian groups, or nilpotent groups
of class~$2$, (a) does imply (b).

For groups with property $(*)$, we can make a strong statement about the power
graphs.

\begin{theorem}\label{uniquecyclic}
Let $G$ be a countable torsion-free group which is not cyclic, but in which
each non-identity element lies in a unique maximal cyclic subgroup. Let $H$ be
a group with $P(H)\cong P(G)$. Then
\begin{enumerate}\itemsep0pt
\item each non-identity element of $H$ lies in a unique maximal cyclic subgroup;
\item $\vec{P}(H)\cong\vec{P}(G)$;
\item any isomorphism from $P(G)$ to $P(H)$ induces an isomorphism from
$\vec{P}(G)$ to $\vec{P}(H)$.
\end{enumerate}
Moreover, all groups $G$ satisfying the hypothesis have isomorphic power graphs.
\end{theorem}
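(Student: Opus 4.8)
The plan is to read the structure of $P(G)$ off the two preceding propositions and then push it through the isomorphism. Since $G$ has property $(*)$, the structure proposition says $P(G)$ is the disjoint union of one isolated vertex (the identity $1_G$) together with a family of copies of $P(\mathbb{Z})\setminus\{1\}$; as $G$ is countable and not cyclic, the counting proposition makes this family countably infinite. This isomorphism type is completely determined, so the final ``moreover'' clause is immediate: every group satisfying the hypotheses has power graph isomorphic to a point together with countably many copies of $P(\mathbb{Z})\setminus\{1\}$.

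For the numbered claims, fix an isomorphism $f\colon P(G)\to P(H)$. First I would observe that $P(H)$ has a unique isolated vertex, so $H$ is torsion-free by Lemma~\ref{order}, and that $f$ matches the isolated vertices (the identities) and carries each non-identity component of $P(G)$ onto a component $C$ of $P(H)$ with $C\cong P(\mathbb{Z})\setminus\{1\}$. The heart of (a) is to show that any such $C$ equals $\langle u\rangle\setminus\{1\}$ for a generator $u$ of a maximal cyclic subgroup. In $P(\mathbb{Z})\setminus\{1\}$ exactly two vertices (namely $\pm1$) are adjacent to all others, so $C$ has exactly two such ``universal'' vertices $u,u'$. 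Every neighbour of $u'$ lies in $C$ and is hence adjacent to $u$, so $S_{u,u'}=S_{u',u}=\varnothing$, whence $u'=u^{-1}$ by Lemma~\ref{inverse}.

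Next I would show every $c\in C$ is a power of $u$. Since $u$ is universal, either $c=u^n$ or $u=c^m$; in the latter case suppose $|m|\ge2$ and choose a prime $j$ with $j\nmid m$ (possible as $|m|\ge2$), noting that then $m\nmid j$ as well. As $H$ is torsion-free, adjacency inside $\langle c\rangle$ is exactly divisibility of exponents, so $c^j\not\sim c^m=u$; but $c\in C$ forces $c^j\in C$, contradicting the universality of $u$. Hence $C\subseteq\langle u\rangle$, so $C=\langle u\rangle\setminus\{1\}$, and $\langle u\rangle$ is maximal cyclic since any $w$ with $u\in\langle w\rangle$ satisfies $w\sim u$, hence $w\in C\subseteq\langle u\rangle$. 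This gives (a): each non-identity $h\in H$ lies in the maximal cyclic subgroup attached to its component, and uniqueness holds because any maximal cyclic subgroup containing $h$ is generated by an element of that same component.

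Finally, for (b) and (c) I would transport directions across $f$ one component at a time. Inside a cyclic component $C=\langle u\rangle\setminus\{1\}$ we have $\langle u\rangle\cong\mathbb{Z}$, so Lemma~\ref{finite} applies verbatim: for adjacent non-inverse $x,y$, we have $x\to y$ exactly when $S_{x,y}$ is finite, and the relevant $S$-sets are computed entirely within the component (all neighbours of a vertex lie in its component, and the single deleted identity changes them only finitely). Inverse pairs are precisely the adjacent pairs with both $S$-sets empty, by Lemma~\ref{inverse}, and carry arcs both ways; the same criteria hold in the corresponding component of $G$. Since $f$ preserves adjacency, it preserves all $S$-set cardinalities, hence the finite/infinite and empty/non-empty dichotomies, hence every arc direction; with $f(1_G)=1_H$ this makes $f$ an isomorphism $\vec{P}(G)\to\vec{P}(H)$, proving (c), and (b) follows. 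The main obstacle is part (a): ruling out that a component isomorphic to $P(\mathbb{Z})\setminus\{1\}$ might come from several cyclic subgroups glued along a common root. The universal-vertex argument above is exactly what excludes this.
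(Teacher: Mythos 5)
Your proposal is correct, and for part (b), part (c) and the final clause it follows essentially the paper's route: decompose $P(G)$ into an isolated identity plus countably infinitely many components isomorphic to $P(\mathbb{Z})\setminus\{1_{\mathbb{Z}}\}$ (using the two propositions preceding the theorem), then recover arc directions componentwise from the cardinalities of the sets $S_{a,b}$ via Lemmas~\ref{inverse} and~\ref{finite}; your extra remark that the $S$-sets are computed entirely within a component is a worthwhile detail the paper glosses over when applying the $\mathbb{Z}$-lemma inside $H$. Where you genuinely diverge is part (a). The paper proves (a) \emph{last}, as a corollary of (b) and (c): given the induced directed isomorphism, each component of $\vec{P}(H)$ inherits a vertex with an arc to every other vertex of the component (the image of a generator of a maximal cyclic subgroup of $G$), whence the component together with the identity is a maximal cyclic subgroup. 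You instead prove (a) \emph{first}, purely in the undirected graph: a component $C\cong P(\mathbb{Z})\setminus\{1_{\mathbb{Z}}\}$ has exactly two universal vertices $u,u'$; Lemma~\ref{inverse} gives $u'=u^{-1}$; and your prime-exponent argument (if $u=c^m$ with $|m|\ge2$, a prime $j\nmid m$ yields $c^j\in C$ with $c^j\not\sim u$ by torsion-freeness) forces $C=\langle u\rangle\setminus\{1\}$. What your version buys is independence of (a) from (b)/(c), and it makes explicit exactly the point the paper's one-line argument leaves implicit — that a component abstractly isomorphic to $P(\mathbb{Z})\setminus\{1_{\mathbb{Z}}\}$ cannot be assembled from several cyclic subgroups sharing powers; the paper's version is shorter because it simply reuses the directed isomorphism already in hand. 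One cosmetic caveat you inherit from the paper rather than introduce: with $S_{a,b}$ read literally and adjacency irreflexive, $a$ itself lies in $S_{a,b}$ whenever $a\sim b$, so both Lemma~\ref{inverse} and your assertion $S_{u,u'}=S_{u',u}=\varnothing$ require the tacit convention that $a$ and $b$ are excluded from $S_{a,b}$; since your usage matches the paper's, this is not a gap in your argument.
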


\begin{proof}
For a group satisfying the hypotheses of the
theorem, there are countably many connected components of $P(G)$ (with the
identity removed), each isomorphic to $P(\mathbb{Z})$ with the identity
removed. So the last statement holds. Since the power graph of $P(\mathbb{Z})$
determines the directions on edges (Lemma~\ref{l41}), (b) and (c) hold.

Now suppose that $f:\vec{P}(G)\to\vec P(H)$ is a directed power graph 
isomorphism. Then each connected component of $\vec{P}(H)$ has a vertex $a$
with an arc to all other vertices of the component (the image under $f$ of a
generator of a maximal cyclic subgroup of $G$); so the component together
with the identity is a maximal cyclic subgroup. This proves (a).
\end{proof}

\begin{cor}
The groups $\mathbb{Z}^n$, for $n\in\mathbb{N}$, $n\ge 2$, or the direct sum
of countably many copies of $\mathbb{Z}$,
all have isomorphic (directed) power graphs.
\end{cor}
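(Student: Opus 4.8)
The plan is to obtain this as a direct consequence of Theorem~\ref{uniquecyclic}: it suffices to verify that each of the listed groups satisfies the hypotheses of that theorem, namely that it is countable, torsion-free, non-cyclic, and has the property that every non-identity element lies in a unique maximal cyclic subgroup. Once the hypotheses are checked, Theorem~\ref{uniquecyclic} delivers both the undirected and directed conclusions at once.

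First I would dispose of the routine conditions. Each group $\mathbb{Z}^n$ (for $n\ge 2$) and the direct sum $\bigoplus_{i=1}^\infty\mathbb{Z}$ is abelian and is a direct sum of copies of the torsion-free group $\mathbb{Z}$, hence torsion-free. Each is countable: for the direct sum this is because its elements have finite support. None is cyclic, since each contains a copy of $\mathbb{Z}^2$ (the first two coordinates), and a subgroup of a cyclic group would itself be cyclic.

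The substantive point is property $(*)$. For existence of a maximal cyclic subgroup containing a given non-identity element $v$, I would imitate the argument given for $\mathbb{Z}^n$ just after Proposition~\ref{torsion}: letting $d$ be the greatest common divisor of the (finitely many) nonzero coordinates of $v$ and setting $w=v/d$, the coordinates of $w$ have gcd~$1$, so $w$ is not a proper power of any element, and $\langle w\rangle$ is a maximal cyclic subgroup containing $v$. For uniqueness I would invoke Proposition~\ref{torsion}: since these groups are abelian they are nilpotent of class at most~$2$, so distinct maximal cyclic subgroups meet trivially, and the non-identity elements are therefore partitioned among the maximal cyclic subgroups.

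With the hypotheses in hand, the concluding statement of Theorem~\ref{uniquecyclic} immediately gives that all these groups have isomorphic power graphs, while parts (b) and (c) of that theorem (resting on the fact that each $P(\mathbb{Z})$-component determines its own orientation, Lemma~\ref{l41}) carry the isomorphisms over to the directed power graphs. I expect no genuine obstacle here; the only point requiring care is to work with the direct sum rather than the direct product, so that countability and finiteness of support hold---these are exactly the properties that make the coordinate-gcd argument go through.
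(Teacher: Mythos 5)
Your proposal is correct and follows exactly the route the paper intends: the corollary is stated there without proof precisely because it is the concluding statement of Theorem~\ref{uniquecyclic} combined with the coordinate-gcd verification of property $(*)$ given immediately after Proposition~\ref{torsion}, which you reproduce and correctly extend to the countable direct sum via finiteness of support. Your explicit checks (countability, non-cyclicity, and uniqueness via trivial intersection of distinct maximal cyclic subgroups) are all sound and merely spell out what the paper leaves implicit.
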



\section{The groups $\mathbb{Q}$ and $\mathbb{Q}^n$}

Next turn to study the additive group of the rationals. Before proving the
main theorem we prove an auxiliary lemma. As before, if $a$ is a vertex of a
directed graph, let $I(a)$ and $O(a)$ denote the sets of in-neighbours and
out-neighbours of $a$.

\begin{lemma}\label{isomorphism}
For $a \in \mathbb{Q}\setminus\{0\}$, define the map
$\varphi_a: \mathbb{Q} \to \mathbb{Q}$ by $x \mapsto a^2/x$ and $0 \mapsto 0$.
Then $\varphi_a$ is an automorphism of $P(\mathbb{Q})$ and an isomorphism from
$\vec{P}(\mathbb{Q})$ to the directed power graph of $\mathbb{Q}$ with all
arrows reversed. Furthermore, it is an isomorphism from $O(a)$ to $I(a)$.
\end{lemma}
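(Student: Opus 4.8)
The plan is to verify the three claims directly, translating the power-graph structure of the additive group $\mathbb{Q}$ into ordinary arithmetic of rationals. First I would record that, since the group operation is addition, ``taking powers'' means taking integer multiples: for nonzero $x,y$ we have $x\to y$ precisely when $y=nx$ for some $n\in\mathbb{Z}\setminus\{0\}$, equivalently when $y/x\in\mathbb{Z}\setminus\{0\}$. The identity is $0$, which is the unique isolated vertex, and $\varphi_a(0)=0$. Since $\varphi_a(\varphi_a(x))=a^2/(a^2/x)=x$ for $x\neq 0$, the map $\varphi_a$ is an involution, and hence a bijection of $\mathbb{Q}$ fixing the isolated vertex.

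The heart of the argument is a single ratio computation. Given nonzero $x,y$, note that
\[
\frac{\varphi_a(x)}{\varphi_a(y)}=\frac{a^2/x}{a^2/y}=\frac{y}{x}.
\]
Hence $y/x$ is a nonzero integer if and only if $\varphi_a(x)/\varphi_a(y)$ is, which says exactly that the arc $x\to y$ of $\vec{P}(\mathbb{Q})$ holds if and only if the arc $\varphi_a(y)\to\varphi_a(x)$ holds. In particular $x\sim y$ in $P(\mathbb{Q})$ if and only if $\varphi_a(x)\sim\varphi_a(y)$, so $\varphi_a$ is an automorphism of the undirected power graph, proving the first claim; and since every arc is sent to its reverse, $\varphi_a$ is an isomorphism from $\vec{P}(\mathbb{Q})$ to the directed power graph of $\mathbb{Q}$ with all arrows reversed, proving the second. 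The isolated vertex $0$ causes no difficulty, since it is fixed and carries no incident arcs.

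For the final statement I would simply observe that $\varphi_a(a)=a^2/a=a$, so $a$ is fixed by $\varphi_a$. Because $\varphi_a$ is an automorphism that reverses every arc and fixes $a$, it must carry the out-neighbours of $a$ to the in-neighbours of $a$; concretely $\varphi_a(na)=a/n$, so $\varphi_a$ restricts to a bijection $O(a)=\{na:n\in\mathbb{Z}\setminus\{0\}\}\to\{a/n:n\in\mathbb{Z}\setminus\{0\}\}=I(a)$, and this restriction is a graph isomorphism because $\varphi_a$ is an automorphism of the whole graph.

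I expect no serious obstacle here; the only point requiring genuine care is that $\varphi_a$ is defined using the \emph{field} multiplication and division of $\mathbb{Q}$, not the additive group operation that underlies the power graph. One must therefore keep the two structures apart and check that this arithmetic map nonetheless respects the additively-defined adjacency. The ratio identity displayed above is precisely what makes this work, and the involution property is what supplies bijectivity for free.
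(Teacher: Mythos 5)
Your proof is correct and follows essentially the same route as the paper: the paper likewise verifies bijectivity, shows $x=ny$ implies $\varphi_a(y)=n\varphi_a(x)$ (your ratio identity in different notation), and computes $\varphi_a(na)=a/n$ to identify $\varphi_a(O(a))=I(a)$. Your observation that $\varphi_a$ is an involution fixing $a$ is a small tidy addition, but not a different method.
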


\begin{proof}
It is straightforward to verify that $\varphi_a$ is a bijection. If $x \sim y$,
then we have $x = ny$ for some $n\in\mathbb{Z}$, say. Then $a^2/x = a^2/ny$, so
$\varphi(y) =  n \varphi(x)$, so $\varphi_a(x) \sim \varphi_a(y)$. We have $ x \to y$ if, and only if, $\varphi_a(y) \to \varphi_a(x)$, if, and only if, $\varphi_a(x) \to \varphi_a(y)$ in the reversed power graph, as needed.

We have
\begin{eqnarray*}
\varphi_a(O(a)) &=& \{\varphi(n a) : n \in \mathbb{Z}\} \\
&=& \{a/n : n \in \mathbb{Z}\} \\
&=& I(a),
\end{eqnarray*}
so $\varphi_a$ maps $O(a)$ to $I(a)$ bijectively and preserves edge
relationships, as required. \qed
\end{proof}

Now we can prove part of Theorem~\ref{t3}.

\begin{theorem}\label{Q1}
Let $G$ be a group with $P(G) \cong P(\mathbb{Q})$. Then
$\vec{P}(G) \cong \vec{P}(\mathbb{Q})$.
\end{theorem}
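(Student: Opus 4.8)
The plan is to show that a single isomorphism $f\colon P(\mathbb{Q})\to P(G)$ either preserves or reverses \emph{every} orientation, and then to repair a global reversal using the arrow-reversing automorphism $\varphi_a$ of Lemma~\ref{isomorphism}. First I would record that, since $P(G)\cong P(\mathbb{Q})$ has a unique isolated vertex, $G$ is torsion-free by Lemma~\ref{order}, so $f(0)$ is the identity of $G$ and, by Lemma~\ref{inverse}, $f$ carries inverse pairs to inverse pairs. In particular, for non-inverse adjacent vertices exactly one orientation holds at both ends in each of $\mathbb{Q}$ and $G$, which is what makes a consistency argument possible.

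Next I would attach to each nonzero $z\in\mathbb{Q}$ a sign $c(z)\in\{+1,-1\}$ recording whether $f$ preserves or reverses the local orientation at $z$. The neighbourhood splits as $N(z)=I(z)\cup O(z)$, the two sides meeting only in the inverse $-z$, which has the same neighbours as $z$ by Lemma~\ref{inverse} and so is joined to every other vertex of $N(z)$; hence $-z$ is the unique isolated vertex of the complement-neighbourhood $N(z)'$. Peeling off $-z$, Lemma~\ref{Q} shows that the remaining parts of $I(z)$ and $O(z)$ are completely joined to one another in the power graph, hence constitute the other connected components of $N(z)'$. By Lemma~\ref{isom_nbrs}, $f$ sends components of $N(z)'$ to components of $N(f(z))'$, matching the isolated vertex with $-f(z)$ and carrying $\{O(z)\setminus\{-z\},\,I(z)\setminus\{-z\}\}$ onto $\{O(f(z))\setminus\{-f(z)\},\,I(f(z))\setminus\{-f(z)\}\}$ either without a swap (set $c(z)=+1$) or with a swap (set $c(z)=-1$).

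Then I would prove $c$ is constant. Take adjacent non-inverse $z,w$ with $z\to w$, so $w\in O(z)$ and $z\in I(w)$; in the torsion-free group $G$ exactly one of $f(z)\to f(w)$, $f(w)\to f(z)$ holds. Reading the forced direction off $c(z)$ and off $c(w)$ and comparing, the two mixed sign-combinations each force both directions at once, a contradiction, so $c(z)=c(w)$. Since any two nonzero rationals have a common in-neighbour (for instance a suitable element $1/N$) that can be chosen distinct from both their inverses, $P(\mathbb{Q})\setminus\{0\}$ is connected through non-inverse edges, and $c$ is therefore globally constant. If $c\equiv+1$ then $f$ is already an isomorphism $\vec{P}(\mathbb{Q})\to\vec{P}(G)$; if $c\equiv-1$ then $f$ reverses every arrow, and composing with $\varphi_a$ from Lemma~\ref{isomorphism} gives $\vec{P}(G)\cong\vec{P}(\mathbb{Q})$.

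The main obstacle is making $c(z)$ well defined: one has to be sure that $N(z)'$ really separates into the inverse singleton together with exactly the two in/out sides, so that $f$ has no option but to preserve or to swap them. The inverse vertex $-z$ is the delicate point — it lies in both $I(z)$ and $O(z)$ and is adjacent to everything in $N(z)$ — and must be isolated out before the in/out dichotomy becomes visible; once that is done the local rigidity, and hence the global constancy of the sign, follows as above.
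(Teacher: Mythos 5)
Your proposal is correct and follows essentially the same route as the paper's proof: both use Lemma~\ref{Q} and Lemma~\ref{isom_nbrs} to show that an isomorphism must either preserve or swap the in/out sides $I(z)$, $O(z)$ at each vertex, propagate that choice to all vertices by connectivity, and rely on the arrow-reversing map $\varphi_a$ of Lemma~\ref{isomorphism} to convert the globally reversed case into an isomorphism $\vec{P}(G)\cong\vec{P}(\mathbb{Q})$. Your write-up is in one respect tidier than the paper's: you explicitly identify $-z$ as the unique isolated vertex of $N(z)'$ (the paper speaks loosely of two components, overlooking this singleton) and run the sign-consistency argument only along non-inverse edges, using a common in-neighbour $1/N$ to keep the non-inverse edge graph connected, which cleanly avoids the bidirected inverse edges.
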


\begin{proof}
Let $x \in \mathbb{Q}$ be non-identity. Then by Lemma~\ref{isomorphism}, we
have that $O(x) \cong I(x)$, so $O(x)' \cong I(x)'$. Let $g \in G$ be
non-identity. Then by Lemma~\ref{Q} we have that $I(g)'$ and $O(g)'$ have no
edges between them. So the complement of the power graph of $G$ restricted to
$N(g)$ consists of two components, one of which is connected. By
Lemma~\ref{isom_nbrs}, an isomorphism $f: P(\mathbb{Q}) \to P(G)$ must
map $I(x)$ to either $I(f(x))$ or $O(f(x))$, since $N(x)'$ and $N(f(x))'$ have
the same number of connected components.  Similarly, $f(O(x))  = O(f(x))$ or
$f(O(x)) = I(f(x))$.

We can now show our result. Suppose that for $x , y \in \mathbb{Q}$, we have
$x \to y$. If $f(x) \to f(y)$, then we claim that
$\vec{P}(G)\cong\vec{P}(\mathbb{Q})$.
We know that $f(O(x)) = O(f(x))$ and similarly $f(I(x)) = I(f(x))$. If
$y \sim z$, then the direction agrees with $f(y) \sim f(z)$, since
$f(x) \in I(f(y))$ implies $f (I(y)) = I ( f(y))$ and similarly for $O(y)$.
Repeating this procedure, we can deduce that the directions of any path
$(f(x)=f(x_1), f(x_2), \ldots , f(x_n))$ agree with those of the corresponding
path $(x = x_1, x_2, \ldots, x_n)$. Since the graph $P(\mathbb{Q})$ is
connected we can reach any point in $P(G)$ by a path starting at $f(x)$.

Now suppose that $f(y) \to f(x)$ instead. Then we have $f(I(x) ) = O(f(x))$
and $f(O(x)) = I(f(x))$. Again, if $y \sim z$, then the direction disagrees
with $f(y) \sim f(z)$, since $f(x) \in O(f(y))$ implies $f(I(y)) = O(f(y))$
and similarly for $O(y)$.  Repeating this procedure, we can deduce that the
directions of any path $(f(x) = f(x_1), f(x_2), \ldots , f(x_n))$ are in exact
reversal with respect to those of the corresponding path
$(x = x_1, x_2, \ldots, x_n)$. Thus $\vec{P}(G)$ has all the arrows reversed
relative to $\vec{P}(\mathbb{Q})$, so we deduce that
$\vec{P}(G)\cong\vec{P}(\mathbb{Q})$. \qed
\end{proof}

We turn now to the group $\mathbb{Q}^n$ for $n>1$, and prove the remaining
part of Theorem~\ref{t3}.

If $a$ and $b$ are non-identity elements, then $a$ and $b$ lie in the same
connected component of the power graph if and only if they span the same
$1$-dimensional vector subspace of $\mathbb{Q}^n$. (For, if $x$ and $y$ lie in
the same vector subspace, then $y=(m/n)x$ for some $m,n\in\mathbb{Z}$, so
$ny=mx$, and there is a path of length $2$ from $x$ to $y$. The converse is
clear.)

So the power graph of
$\mathbb{Q}^n$ consists of countably many disjoint copies of 
$P(\mathbb{Q})\setminus\{0\}$ together with an isolated vertex. For $x\ne0$,
let $Q_x$ denote the connected component containing $x$.

\begin{theorem}
Let $G$ be a group with $P(G) \cong P(\mathbb{Q}^n)$. Then
$\vec{P}(G) \cong \vec{P}(\mathbb{Q}^n)$.
\end{theorem}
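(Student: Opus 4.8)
The plan is to exploit the decomposition of the power graph into connected components recorded just above, and to run the orientation argument of Theorem~\ref{Q1} independently on each component. First I would note that, by Lemma~\ref{order}, $P(G)$ has a unique isolated vertex, so $G$ is torsion-free and that vertex is $1_G$. As observed before the theorem, $P(\mathbb{Q}^n)$ is the disjoint union of an isolated vertex and countably many copies of $C:=P(\mathbb{Q})\setminus\{0\}$; since $P(G)\cong P(\mathbb{Q}^n)$, the same holds for $P(G)$, and any undirected isomorphism matches the two families of components bijectively. Writing $\vec C$ for the induced orientation of $C$ in $\vec P(\mathbb{Q})$, the theorem reduces to the claim that every connected component of $\vec P(G)$ is isomorphic, as a directed graph, to $\vec C$: for then both $\vec P(G)$ and $\vec P(\mathbb{Q}^n)$ are a disjoint union of an isolated vertex with countably many copies of $\vec C$, and matching identities to identities and components to components yields the desired directed isomorphism.

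To prove the claim, fix an isomorphism $f:P(\mathbb{Q}^n)\to P(G)$ and a component $Q_x$, and set $D=f(Q_x)$, a component of $P(G)$. I would apply Lemma~\ref{isom_nbrs} with $\mathbb{Q}^n$ in the role of the torsion-free group and $G$ in that of the arbitrary group. For $y\in Q_x$ the neighbourhood $N(y)$ lies entirely in $Q_x$, and $N(y)'$ has exactly two connected components, $O(y)'$ and $I(y)'$: the former is connected by Lemma~\ref{Q}, and the latter is isomorphic to it via the map $\varphi_y$ of Lemma~\ref{isomorphism} applied within the copy of $P(\mathbb{Q})$ containing $y$, hence also connected. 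By Lemma~\ref{Q} the same two-part splitting of $N(f(y))'$ holds in $P(G)$, so $f$ must send $\{I(y),O(y)\}$ onto $\{I(f(y)),O(f(y))\}$. Exactly as in the proof of Theorem~\ref{Q1}, the behaviour at one edge of $Q_x$ determines whether $f$ preserves or reverses its orientation, and this propagates along paths: if $f(O(y))=O(f(y))$ at one vertex then the orientation of $f(y)\sim f(z)$ agrees with that of $y\sim z$ for every neighbour $z$, while a single reversal forces reversal throughout. Since $Q_x$ is connected, $f|_{Q_x}$ either preserves the orientation everywhere on $D$ or reverses it everywhere.

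If $f|_{Q_x}$ preserves orientation, it is already a directed isomorphism $\vec{Q_x}\to\vec D$, so $\vec D\cong\vec C$. If it reverses orientation, then $\vec D$ is isomorphic to $\vec C$ with all arrows reversed; but Lemma~\ref{isomorphism} supplies an isomorphism $\varphi_a$ from $\vec P(\mathbb{Q})$ to its reverse which fixes the origin, and hence restricts to an automorphism of $Q_x$ reversing orientation, giving $\vec C\cong\vec C$ reversed. Thus $\vec D\cong\vec C$ in either case, proving the claim. Assembling these component isomorphisms (precomposing $f$ on each reversed component with the reversing automorphism $\varphi_a$ of that component) together with $1_{\mathbb{Q}^n}\mapsto 1_G$ produces a directed isomorphism $\vec P(\mathbb{Q}^n)\to\vec P(G)$. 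The main obstacle is precisely this per-component orientation analysis: unlike the case $n=1$ of Theorem~\ref{t3}, where the single component forces a given isomorphism to be globally orientation-preserving or orientation-reversing, here distinct components may be oriented in opposite senses, so $f$ itself need not induce a directed isomorphism; the point is that Lemma~\ref{isomorphism} makes each component isomorphic to $\vec C$ regardless, so an abstract directed isomorphism can nonetheless be assembled component by component.
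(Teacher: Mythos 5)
Your proposal is correct and follows essentially the same route as the paper: restrict the given isomorphism to each connected component (a copy of $P(\mathbb{Q})\setminus\{0\}$), run the orientation-propagation argument of Theorem~\ref{Q1} to conclude that the restriction preserves or reverses orientation throughout that component, and use the reversing map $\varphi_a$ of Lemma~\ref{isomorphism} to see that a reversed component is still directed-isomorphic to the standard one, then assemble component by component. Your explicit remark that the given isomorphism $f$ itself need not induce a directed isomorphism (since distinct components may be flipped in opposite senses) is only implicit in the paper, and is exactly why the $n>1$ statement of Theorem~\ref{t3} is weaker than the $n=1$ case.
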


\begin{proof}
Let $f: \mathbb{Q}^n\to G$ be a power graph isomorphism. Let $x\in\mathbb{Q}^n$
be non-identity. We deduce from Lemma~\ref{isomorphism}
that $I(x) \cong O(x)$. By Lemma~\ref{order}, $G$ must be torsion free, so we
can apply Lemma~\ref{Q} to deduce, by the same arguments as in the proof of
Theorem~\ref{Q1}, that $O(f(x)) \cong I(f(x))$ (since $f(I(x)) = I(f(x))$ or
$f(I(x)) = O(f(x))$ and similarly for $O(x)$) and thus that the connected
component containing $f(x)$, $C_{f(x)}$, has a directed power graph isomorphic
to that of the connected component $Q_x$ containing $x$. Repeating this
procedure for all the connected components of $G$, we conclude that
$\vec{P}(G) \cong \vec{P}(\mathbb{Q}^n)$. \qed
\end{proof}

\section{Subgroups of $\mathbb{Q}$}

We now examine the power graphs of subgroups of $\mathbb{Q}$. We begin with
a general result.

\begin{lemma}\label{component}
Let $G$ be a nilpotency class $2$ torsion-free group and $C$ a connected
component of $P(G)$. Then the vertices of $C$ form a subgroup of $G$.
\end{lemma}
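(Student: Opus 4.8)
The plan is to identify each connected component of $P(G)$ with a \emph{commensurability class}, and then to use nilpotency class~$2$ together with torsion-freeness to show that such a class, augmented by the identity, is closed under the group operations. Call two non-identity elements $y,z$ \emph{commensurable} if $y^p=z^q$ for some non-zero integers $p,q$; equivalently $\langle y\rangle\cap\langle z\rangle\neq\{1\}$. A direct computation shows this relation is transitive in any group (from $y^p=z^q$ and $z^r=w^s$ one gets $y^{pr}=z^{qr}=(w^s)^q=w^{qs}$ with $pr,qs\neq0$), so it is an equivalence relation on $G\setminus\{1\}$.

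First I would show that the connected component of a non-identity $x$ is exactly its commensurability class. Adjacency in $P(G)$ means one element is a power of the other, so adjacent non-identity elements are commensurable; by transitivity, every element of the component of $x$ is commensurable with $x$. Conversely, if $x^m=y^n$ with $m,n\neq0$, then $x\sim x^m=y^n\sim y$ is a path of length~$2$, so $y$ lies in the component of $x$. The identity is an isolated vertex, so its component $\{1\}$ is trivially a subgroup; from now on $C$ is the component of some non-identity $x$, and the aim is to prove that $C\cup\{1\}$ is a subgroup.

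Closure under inverses is immediate, since $y^{-1}$ is a power of $y$, whence $y^{-1}\sim y$ and $y^{-1}\in C$. The substantive step is closure under multiplication: given $y,z\in C$, I must show $yz\in C\cup\{1\}$. This is where the hypotheses enter. Since $y$ and $z$ both lie in $C$ they are commensurable, say $y^p=z^q$; then $y^p$ is a power of $z$ and hence commutes with $z$, so using the class-$2$ identity $[y^p,z]=[y,z]^p$ (exactly as in the proof of Proposition~\ref{torsion}) together with torsion-freeness gives $[y,z]=1$. Thus $\langle y,z\rangle$ is a finitely generated torsion-free abelian group satisfying the non-trivial relation $y^pz^{-q}=1$; a rank argument then shows it is infinite cyclic, say $\langle y,z\rangle=\langle w\rangle$. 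Writing $y=w^a$ and $z=w^b$, we obtain $yz=w^{a+b}$, which is either the identity or a power of $w$ commensurable with $y$, hence with $x$; in either case $yz\in C\cup\{1\}$.

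The main obstacle is precisely this last paragraph: one must convert the purely graph-theoretic statement ``$y$ and $z$ lie in the same component'' into the algebraically usable facts that $y$ and $z$ commute and lie in a common cyclic subgroup. Transitivity of commensurability delivers a single relation $y^p=z^q$, the class-$2$ commutator identity together with torsion-freeness upgrades this to commutativity, and the classification of finitely generated torsion-free abelian groups forces $\langle y,z\rangle\cong\mathbb{Z}$. I expect the only points needing care are the verification of the commutator computation $[y^p,z]=[y,z]^p$ by induction from $[a_1a_2,b]=[a_1,b][a_2,b]$, and the rank argument: in $\langle y,z\rangle\cong\mathbb{Z}^r$ the two generators $y,z$ satisfy a non-trivial $\mathbb{Q}$-linear dependence, so they span a rank-$1$ subgroup, and since they generate the whole group this forces $r=1$.
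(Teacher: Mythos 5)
Your proof is correct, and while its algebraic core coincides with the paper's --- both hinge on the class-$2$ identity $[y^p,z]=[y,z]^p$ from Proposition~\ref{torsion} together with torsion-freeness to force $[y,z]=1$, and then on the classification of two-generated torsion-free abelian groups to conclude $\langle y,z\rangle\cong\mathbb{Z}$ --- you organize the graph-theoretic reduction genuinely differently. The paper first treats vertices at distance $2$ in the component (four cases according to edge orientations, of which only $x\to z\leftarrow y$ is substantive) and then runs an induction on path length, replacing $x$ and the vertex $z$ two steps along by a common generator $w$ of $\langle x,z\rangle$ to shorten the path. You instead note that adjacency implies commensurability ($y^p=z^q$ with $p,q\neq0$) and that commensurability is transitive by a one-line exponent computation, so any two vertices of a component satisfy a single such relation; this collapses the paper's induction and its four-case analysis into the one substantive case, applied exactly once, which is a cleaner decomposition. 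Your exclusion of $\mathbb{Z}^2$ via $\mathbb{Q}$-linear dependence of $y$ and $z$ substitutes for the paper's argument that $\langle y,z\rangle/\langle y\rangle$ is finite while $\mathbb{Z}^2/\langle g\rangle$ is always infinite; both are sound. Finally, you are more careful than the paper on one small point: since the identity is an isolated vertex, a non-trivial component $C$ is a subgroup only after adjoining $1$ (the paper's closing step $xy^{-1}\in C$ fails when $x=y$), and your formulation --- $\{1\}$ is itself a component, and $C\cup\{1\}$ is a subgroup for every other component --- is the correct reading of the lemma, and indeed the reading needed when the lemma is applied later.
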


\begin{proof}
We first show that $x$ and $y$ in $C$ being two steps apart implies that 
$\langle x, y \rangle$ is cyclic. If we have any of the possibilities
\begin{itemize}\itemsep0pt
\item[] $z \to x$, $z \to y$: then $x,y\in\langle z\rangle$;
\item[] $z \to x$, $y \to z$: then $x\in\langle y\rangle$;
\item[] $z \to y$, $x \to z$: similar;
\end{itemize}
then we are done. Hence, suppose that $x \to z$ and $y \to z$, so
$z = x^n = y^m$ for some $n,m\in\mathbb{Z}$. By the same argument as in
Proposition~\ref{torsion}, we see that $\langle x, y \rangle$ is a finitely
generated
abelian group, it must be one of $\mathbb{Z}^2$ or $\mathbb{Z} \times C_k$.
Since $G$ is torsion-free it cannot be $\mathbb{Z}\times C_k$ for $k>1$.
It cannot be $\mathbb{Z}^2$, since $\langle x \rangle$ is a cyclic subgroup of
finite index, contradicting the result in Proposition~\ref{torsion}.
Therefore, $\langle x ,y \rangle \cong \mathbb{Z}$.

Now we show that for all $x, y \in C$, $\langle x , y \rangle$ is cyclic.
We use induction on the length of the path from $x$ to $y$. Suppose that $z$
is the point on the path two steps from $x$. Then $\langle x,z\rangle
=\langle w\rangle$, and the path from $w$ to $y$ is shorter than the path from
$x$ to $y$; so $\langle w,y\rangle$ is cyclic and contains $x$.

Finally, let $x, y \in C$. We have $\langle x, y \rangle = \langle a \rangle$
for some $a \in C$. Then $x  y^{-1} \in \langle a \rangle\subseteq C$, so
$x y^{-1} \in C$. Thus $C \leq G$, as claimed. \qed
\end{proof}

Before we continue, we state a result which can be found in \cite{ref3}.

We define a \emph{unitary subgroup} of $\mathbb{Q}$ to be a subgroup that
contains $1$.

\begin{theorem}\label{Unitary subgroup}
Every non-trivial subgroup $S$ of $\mathbb{Q}$ is isomorphic to at least one
unitary subgroup of $\mathbb{Q}$.
\end{theorem}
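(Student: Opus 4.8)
The plan is to exploit the fact that the additive group $\mathbb{Q}$ is also a field, so that multiplication by any nonzero rational is a group automorphism. Concretely, since $S$ is non-trivial I first choose a nonzero element $s \in S$. I then define the scaling map $\mu\colon \mathbb{Q} \to \mathbb{Q}$ by $x \mapsto x/s$ (equivalently $x \mapsto s^{-1}x$), and verify that it is an automorphism of the additive group.

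The verification is short. Additivity is immediate: $\mu(x+y) = (x+y)/s = x/s + y/s = \mu(x) + \mu(y)$. Bijectivity follows because $\mu$ has an explicit two-sided inverse, namely $y \mapsto sy$; both maps are well-defined on $\mathbb{Q}$ since $s \neq 0$. Hence $\mu$ is an isomorphism of $\mathbb{Q}$ onto itself.

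With this in hand, the image $T = \mu(S)$ is a subgroup of $\mathbb{Q}$, and the restriction $\mu|_S\colon S \to T$ is a group isomorphism, so $S \cong T$. Moreover $\mu(s) = s/s = 1$, so $1 \in T$, which is exactly the condition that $T$ be a unitary subgroup. This exhibits the required unitary subgroup isomorphic to $S$ and completes the argument.

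There is essentially no serious obstacle here; the only point requiring any attention is that $S$ must actually contain a nonzero element so that the scaling factor $s$ exists, which is guaranteed by the hypothesis that $S$ is non-trivial. It is also worth remarking that the unitary subgroup produced is generally not unique (different choices of $s \in S$ may give different images), which is consistent with the theorem asserting only existence of \emph{at least one} such subgroup.
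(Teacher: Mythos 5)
Your proof is correct and complete. Note that the paper itself gives no proof of this statement: it is quoted from the cited classification of subgroups of $\mathbb{Q}$ (Miller, \cite{ref3}), so there is no in-paper argument to compare against. Your scaling argument --- pick $s\in S\setminus\{0\}$, observe that $x\mapsto x/s$ is an automorphism of the additive group $\mathbb{Q}$, and conclude that $T=\frac{1}{s}S$ is a subgroup isomorphic to $S$ with $1\in T$ --- is the standard proof of this fact and in substance the same one found in the cited source. All the details you check are the right ones: additivity, the explicit inverse $y\mapsto sy$ (so the image of a subgroup is a subgroup and the restriction $\mu|_S$ is an isomorphism), and the hypothesis that $S$ is non-trivial being exactly what guarantees a nonzero $s$ exists. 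Your closing remark on non-uniqueness is also apt, since the theorem claims only ``at least one'' unitary subgroup, and indeed Theorem~\ref{Isomorphism of unitary subgroups} shows different choices of $s$ yield unitary subgroups with equivalent height functions.
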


Define $P$ to be the set of all prime numbers and let $M$ be the set
$\{f:P \to \{\mathbb{N} \cup \{0,\infty\}\}$. Any $f \in M$ is called a
\emph{height function}.

\paragraph{Definition}
For a unitary subgroup $A$ of $\mathbb{Q}$, the \emph{height function}
$h_A \in M$ associated to $A$ is defined as follows: for each prime $p$,
$h_A(p)= \max\{\alpha:\frac{1}{p^\alpha} \in A\}$.

Next we state two results of \cite{ref3}.

\begin{lemma}\label{Lemma 1}
Let $A$ be a unitary subgroup of $\mathbb{Q}$. Then for relatively prime $m$
and $n$, $m/n \in A$ if, and only if, $1/n \in A$.
\end{lemma}

\begin{lemma}\label{Lemma 2}
Let $A$ be a unitary subgroup of $\mathbb{Q}$. Then for relatively prime $m$
and $n$, $1/(mn) \in A$ if, and only if, $1/m \in A$ and $1/n \in A$.
\end{lemma}

We are now ready to prove several auxiliary results. In what follows, we
work in the power graph $P(A)$ but take restrictions to the set $I_A(x)$ of 
in-neighbours of a vertex in the directed power graph $\vec{P}(A)$.

\begin{lemma}\label{The height funtion}
Let $A$ be a unitary subgroup of $\mathbb{Q}$ and $P(A)$ be the power
graph of $A$. Then there exists $x \in A$ such that the set $I_A(x)$ of
in-neighbours of $x$ in $P(A)$ is infinite if, and only if, either there
exists a prime $p$ such that $h_A(p)=\infty$, or there are infinitely many
primes $q$ such that $h_A(q)>0$.
\end{lemma}

\begin{proof}
If there exists a prime $p$ such that $h_A(p)=\infty$ or there are infinitely
many primes $q$ such that $h_A(q)>0$, then $I_A(1)$ is infinite.

In order to prove the forward implication, we will prove the contrapositive.
Let $x \in A$. If $x=0$, then clearly $I_A(x)$ is finite, so suppose $x\neq 0$.
If $y \in I_A(x)$, then $|y| < |x|$. Let $|y|=m/n$, where $\gcd(m,n)=1$.
Factorize $n$ to the form
\[n= \pm p_1^{\alpha_1}p_2^{\alpha_2} \cdots p_n^{\alpha_n},\] where each
$p_i$ is a prime and $\alpha_i \in \mathbb{N}$. By Lemma~\ref{Lemma 1},
$m/n\in A$ if, and only if, $1/n\in A$. Hence by repeatedly using
Lemma~\ref{Lemma 2}, $m/n\in A$ if, and only if,
\[ \frac{1}{p_i^{\alpha_i}} \in A\]
for all $i \in {1,2,\ldots,n}$. By our assumption, there are only finitely many
numbers in $A$ of the form $\frac{1}{p^{\alpha}}$, where $p$ is a prime and
$\alpha \in \mathbb{N}$, hence there are only finitely many possibilities for
the value of $n$ as $y$ ranges over $I_A(x)$. For fixed $n$, since 
$|m|<|x|\cdot|n|$, there are only finitely many possibilities for  $m$.  We
conclude that $I_A(x)$ is finite. \qed
\end{proof}

\begin{lemma}\label{Cardinalities}
Let $A$ be a unitary subgroup of $\mathbb{Q}$ and $P(A)$ be the power graph of
$A$. Then the following two statements are equivalent:
\begin{itemize}\itemsep0pt
\item There exists $x \in A$ such that $I_A(x)$ is infinite.
\item For all $x \in A$ such that $x \neq 0$, $I_A(x)$ is infinite.
\end{itemize}
\end{lemma}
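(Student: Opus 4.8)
The plan is to prove the two implications separately, leaning on the characterisation already established in Lemma~\ref{The height funtion}. The implication from the second statement to the first is immediate: since $A$ is unitary it contains the nonzero element $1$, so if $I_A(x)$ is infinite for every nonzero $x$, then in particular $I_A(1)$ is infinite, and $x=1$ witnesses the first statement.

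For the converse, I would first feed the hypothesis into Lemma~\ref{The height funtion}: the existence of some $x$ with $I_A(x)$ infinite is equivalent to the condition that either some prime $p$ has $h_A(p)=\infty$, or infinitely many primes $q$ have $h_A(q)>0$. The key gain here is that this converts an $x$-dependent, existential hypothesis into a condition on $A$ alone, no longer referring to any particular vertex. It therefore suffices to show that each of these two global conditions forces $I_A(y)$ to be infinite for \emph{every} nonzero $y\in A$.

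So fix an arbitrary nonzero $y\in A$ and write $y=m/k$ in lowest terms; by Lemma~\ref{Lemma 1} the membership $y\in A$ already yields $1/k\in A$. The set $I_A(y)$ consists exactly of the quotients $y/n$ with $n\in\mathbb{Z}\setminus\{0\}$ that lie in $A$, and since $y\neq 0$ distinct $n$ give distinct such quotients; so it is enough to exhibit infinitely many $n$ with $y/n\in A$. In the first case I would take $n=p^\alpha$ for $\alpha\in\mathbb{N}$ and verify $y/p^\alpha=m/(kp^\alpha)\in A$; in the second I would let $n=q$ range over the infinitely many primes with $1/q\in A$ and verify $y/q=m/(kq)\in A$. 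Each such membership is checked by combining $1/k\in A$ with $1/p^\alpha\in A$ (respectively $1/q\in A$) via Lemma~\ref{Lemma 2}, and then multiplying by the integer $m$, using that $A$ is closed under integer multiples.

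The step I expect to be the main obstacle is the divisibility bookkeeping when the relevant prime divides $m$ or $k$. The naive argument ``$1/n\in A$ together with $\gcd(n,mk)=1$ imply $y/n\in A$'' works in the second case, since all but finitely many of the primes $q$ with $1/q\in A$ are coprime to $mk$; but it can fail completely in the first case, for instance when the only prime of positive height is a single $p$ with $h_A(p)=\infty$ and $p\mid k$, because then no power $p^\alpha$ is coprime to $mk$. I would handle this not by forcing coprimality but by factoring the denominator $kp^\alpha$ into coprime parts and applying Lemma~\ref{Lemma 2} directly, first recovering $1/k'\in A$ for the $p$-free part $k'$ of $k$ from $1/k\in A$, so that $y/p^\alpha\in A$ is established no matter how $p$ meets $m$ and $k$.
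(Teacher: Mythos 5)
Your proposal is correct and follows essentially the same route as the paper: the trivial direction via $1\in A$, then Lemma~\ref{The height funtion} to reduce to the dichotomy (some $h_A(p)=\infty$, or infinitely many primes of positive height), and Lemmas~\ref{Lemma 1} and~\ref{Lemma 2} to produce infinitely many in-neighbours $y/p^\alpha$ resp.\ $y/q$ of an arbitrary nonzero $y$. Your handling of the first case is in fact slightly cleaner than the paper's, which splits into subcases according to whether $p$ divides the denominator, whereas extracting the $p$-free part of the denominator and multiplying by the integer numerator treats both subcases uniformly.
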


\begin{proof}
Observe that the converse implication is trivial. So suppose there exists
$x \in A$ such that $I_A(x)$ is infinite. Then by
Lemma~\ref{The height funtion}, there exists a prime $p$ such that
$h_A(p)=\infty$ or there are infinitely many primes $q$ such that $h_A(q)>0$.
Suppose there exists a prime $p$ such that $h_A(p)=\infty$. Let
$y = m/n \in A$ be non-identity. Since $I_A(y) = I_A(-y)$ we can assume without 
loss of generality that $y >0$. Moreover, we can assume $\gcd(m,n)=1$. By
Lemma~\ref{Lemma 1}, $m/n \in A$ if, and only if, $1/n \in A$. If
$\gcd(p,n)=1$, then using Lemma~\ref{Lemma 2}, we have that
$1/(np^{\alpha}) \in A$ for all $\alpha \in \mathbb{N}$. Let
$\gcd(p,m)=p^{\beta}$ and $m=kp^{\beta}$, for some $k \in \mathbb{Z}$.

Since $\gcd(n, k) =1$ and $\gcd(p, k) =1$, we have by Lemma~\ref{Lemma 1},
\[\frac{k}{np^{\alpha}}=\frac{kp^{\beta}}{np^{\alpha +\beta}}=\frac{m}{np^{\alpha +\beta}} \in A\]
for all $\alpha \in \mathbb{N}$.
Hence $I_A(y)$ is infinite. If $\gcd(p,n) \neq 1$, then $\gcd(p,m)=1$.
Factorize $n$ to the form
\[n=  p^{\beta}p_1^{\alpha_1}\cdots p_n^{\alpha_n},\]
where each $p_i$ is a prime and $\alpha_i \in \mathbb{N}$. By repeatedly using
Lemma~\ref{Lemma 2}, $1/n \in A$ if, and only if, $1/p_i^{\alpha_i} \in A$
for all $i \in {1,2,\ldots,n}$ and $1/p^{\beta} \in A$. It follows again by
repeatedly using Lemma~\ref{Lemma 2}, that
\[\frac{1}{ p_1^{\alpha_1}\cdots p_n^{\alpha_n}p^{\alpha}}=\frac{1}{np^{\alpha-\beta}} \in A\]
for all $\alpha \in \mathbb{N}$. Hence as $\gcd(p,m)=1$ and $\gcd(m,n)=1$, we
have
\[\frac{m}{p_1^{\alpha_1}\cdots p_n^{\alpha_n}p^{\alpha}}=\frac{m}{np^{\alpha-\beta}} \in A\]
for all $\alpha \in \mathbb{N}$ with $\alpha > \beta$, and so $I_A(y)$ is
infinite.

Now suppose that there are infinitely many primes $q$ such that
$h_A(q)>0$. As there are only finitely many primes dividing $m$ or $n$, it
follows that there are infinitely many $k \in \mathbb{Z}$ such that
$\gcd(k,n)=1$, $\gcd(k,m)=1$, and $k$ is a product of primes $q$ such that
$h_A(q)>0$. Now it follows by Lemma~\ref{Lemma 1} and Lemma~\ref{Lemma 2}
that for all such $k$ we have $m/kn \in A$. Hence we conclude that $I_A(y)$ is
infinite. \qed
\end{proof}

\begin{lemma}\label{Lemma 3.16}
Let $G$ be an nilpotency class $2$ torsion-free group with a connected power
graph $P(G)$. If $H$ is an nilpotency class $2$ group with $P(G) \cong P(H)$,
then $\vec{P}(G) \cong \vec{P}(H)$.
\end{lemma}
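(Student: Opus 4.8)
The plan is to reduce the whole statement to the case of subgroups of $\mathbb{Q}$, where the height‑function machinery of Lemmas~\ref{The height funtion} and~\ref{Cardinalities} becomes available, and then to split into two cases according to the cardinality dichotomy.

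First I would identify $G$. Since $P(G)$ is connected, the unique non‑trivial component together with the identity is, by Lemma~\ref{component}, a subgroup equal to all of $G$; moreover the proof of that lemma shows that any two elements of $G$ lie in a common cyclic subgroup, so $G$ is locally cyclic. A torsion‑free locally cyclic group embeds in $\mathbb{Q}$, and by Theorem~\ref{Unitary subgroup} we may take $G$ to be a unitary subgroup $A\le\mathbb{Q}$. The same reasoning applies to $H$: the single isolated vertex of $P(G)$ forces, via Lemma~\ref{order}, that $H$ is torsion‑free with the corresponding vertex as its identity, so $P(H)$ again has exactly one non‑trivial (connected) component; as $H$ has nilpotency class $2$, Lemma~\ref{component} makes $H$ locally cyclic and hence a unitary subgroup $B\le\mathbb{Q}$. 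Because a group isomorphism preserves directed power graphs, it then suffices to deduce $\vec{P}(A)\cong\vec{P}(B)$ from $P(A)\cong P(B)$.

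Next I would run the dichotomy of Lemma~\ref{Cardinalities}: either every non‑zero vertex has a finite in‑neighbourhood, or every non‑zero vertex has an infinite one. By (the contrapositive of) Lemma~\ref{The height funtion} the finite alternative holds exactly when only finitely many primes have positive height and none has infinite height, i.e.\ precisely when $A\cong\mathbb{Z}$; and the condition $A\cong\mathbb{Z}$ is detectable from the undirected graph, since $P(A)\cong P(\mathbb{Z})$ forces $A\cong\mathbb{Z}$ by Theorem~\ref{t0}. Hence $A$ and $B$ fall on the same side of the dichotomy, and in the cyclic case Theorem~\ref{t0} immediately gives $\vec{P}(A)\cong\vec{P}(\mathbb{Z})\cong\vec{P}(B)$. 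The substantial case is when all in‑neighbourhoods are infinite. For non‑zero $x$, Lemma~\ref{Q} shows that $O(x)$ is a single connected component of $N(x)'$ in the complement $P(A)'$, with no $P(A)'$‑edges joining it to $I(x)$, and it is isomorphic to the comparability graph of $(\mathbb{Z}\setminus\{0\},\mid)$. By Lemma~\ref{isom_nbrs} an isomorphism $f\colon P(A)\to P(B)$ carries components of $N(x)'$ to components of $N(f(x))'$, so $f(O(x))$ is either $O(f(x))$ or lies inside $I(f(x))$. Whenever $O(x)$ is isomorphic to no component of $I(x)'$, the first alternative is forced; and once we know at a single vertex that $f$ preserves orientation, this propagates along the connected graph $P(A)\setminus\{1_A\}$ exactly as in the proof of Theorem~\ref{Q1}, so that $f$ itself is a directed isomorphism.

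The hard part will be the symmetric situation in which $I(x)$ and $O(x)$ are graph‑isomorphic, so the local data cannot separate in‑ from out‑neighbourhoods; this happens for $A=\mathbb{Q}$. I expect to handle it by showing that a non‑cyclic \emph{proper} subgroup always has at least one vertex where $O(x)$ is distinguishable — typically $x$ a prime $q$ with $h_A(q)<\infty$, where the bounded ``downward'' divisibility in $I(x)$ cannot match the always‑unbounded structure of $O(x)$ — which pins the orientation everywhere by propagation; while the genuinely symmetric group $\mathbb{Q}$ is treated separately through the explicit orientation‑reversing automorphism of Lemma~\ref{isomorphism}, which makes $\vec{P}(\mathbb{Q})$ isomorphic to its own reversal, so that either global choice of orientation still yields $\vec{P}(A)\cong\vec{P}(B)$. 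Verifying rigorously that a bounded downward direction really does obstruct the isomorphism $I(x)\cong O(x)$ — thereby guaranteeing a distinguishing vertex outside the single fully symmetric case — is the step I anticipate will require the most care.
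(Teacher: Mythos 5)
Your reduction to unitary subgroups of $\mathbb{Q}$ and your appeal to the dichotomy of Lemma~\ref{Cardinalities} coincide with the paper's opening moves, and your finite case is correct --- identifying the finite alternative with $A\cong\mathbb{Z}$ and quoting Theorem~\ref{t0} is in fact slicker than the paper's treatment, which instead notes that $O(x)'$ is then the unique infinite component of $N(x)'$, so the given $f$ itself preserves orientation. The genuine gap is in the infinite case, exactly at the step you flagged. Your plan rests on the claim that every non-cyclic proper unitary subgroup admits a vertex $x$ at which $O(x)'$ is isomorphic to no component of $I(x)'$, with $\mathbb{Q}$ the only fully symmetric exception. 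This is unproven, and the exceptional behaviour is certainly not confined to $\mathbb{Q}$: take $A=\mathbb{Z}[1/q:q\in S]$ for an infinite, co-infinite set $S$ of primes (so $h_A=\infty$ on $S$ and $0$ off $S$). At $x=1$ we have $O(1)=\mathbb{Z}\setminus\{0\}$, while $I(1)$ consists, up to sign, of the reciprocals $1/n$ with $n$ in the multiplicative monoid generated by $S$; both are (signed) comparability graphs of divisibility on a free abelian monoid with countably many generators, so $I(1)'\cong O(1)'$, and by Lemma~\ref{Q} both are connected, so $N(1)'$ has exactly two components which are isomorphic. Thus the vertex you would "typically" use fails, and whether \emph{any} vertex of this $A$ distinguishes comes down to whether the comparability graph of a free abelian monoid of infinite rank with finitely many capped coordinates is non-isomorphic to the uncapped one --- precisely the delicate question you deferred. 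Worse, for any $A$ with no distinguishing vertex your argument needs $\vec{P}(A)$ to be isomorphic to its own reversal (your $\varphi_a$ trick only works for $\mathbb{Q}$), and nothing you propose establishes this for such $A$. A secondary problem: your propagation step is imported from Theorem~\ref{Q1}, whose argument uses that $N(x)'$ has exactly two components (true in $\mathbb{Q}$, where $I(x)'$ is connected); for a general unitary subgroup $I(x)'$ may be disconnected, and pinning down one component of $N(f(x))'$ does not pin down the rest, so even granted a distinguishing vertex the propagation needs reworking.

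The paper sidesteps all of this: in the infinite case it does not attempt to read the orientation off the given isomorphism $f$ at all. Instead it discards $f$ (keeping only the information it yields via Lemma~\ref{isom_nbrs}, namely that every non-identity vertex of $H$ also has infinite in-neighbourhood) and builds a direction-preserving bijection $f'\colon\vec{P}(G)\to\vec{P}(H)$ from scratch: fix $z_0$, set $f'(z_0)=f(z_0)$, and extend along paths, at each stage choosing $f'(z_{n+1})$ inside $O(f'(z_n))$ or $I(f'(z_n))$ as appropriate while avoiding the finitely many vertices already used --- possible because both $O$- and $I$-sets are infinite at every non-identity vertex, with connectivity of $P(G)$ ensuring the construction reaches everything; the auxiliary $G\cap H$ argument serves to identify $O(y)$ and $I(y)$ concretely inside $H$. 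So in the infinite case you should either adopt a construction of this direct kind or actually prove your distinguishing-vertex dichotomy; as stated, the dichotomy is the missing load-bearing step, and the example above shows it cannot be taken for granted.
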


The proof uses the following two results; the first can be found in
\cite{ref1} or \cite[Chapter VIII, Section 30]{ref2}, and the second in
\cite{ref3}.

\begin{prop}
Let $G$ be a group. Then the following two statements are equivalent:
\begin{itemize}\itemsep0pt
\item $G$ is torsion-free and locally cyclic;
\item $G$ is embedded in $\mathbb{Q}$.
\end{itemize}
\end{prop}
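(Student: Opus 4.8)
The plan is to prove the two implications separately, noting first that local cyclicity forces $G$ to be abelian (any two elements lie in a finitely generated, hence cyclic, hence abelian subgroup, so they commute), which lets me write $G$ additively throughout. For the easy direction, I would assume $G$ is (isomorphic to) a subgroup of $\mathbb{Q}$. Then $G$ is torsion-free because $\mathbb{Q}$ is. For local cyclicity, take any finitely generated subgroup, generated by rationals $a_1/b_1,\ldots,a_k/b_k$; every generator lies in $\frac{1}{b}\mathbb{Z}$ with $b=b_1\cdots b_k$, so the whole subgroup lies there. Since $\frac{1}{b}\mathbb{Z}\cong\mathbb{Z}$ is infinite cyclic and subgroups of cyclic groups are cyclic, the finitely generated subgroup is cyclic. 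This direction is routine.

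For the substantial direction, assume $G$ is torsion-free and locally cyclic. If $G=\{0\}$ the embedding is trivial, so fix a nonzero $g_0\in G$, and I aim to build a homomorphism $\varphi:G\to\mathbb{Q}$ with $\varphi(g_0)=1$. Given $g\in G$, the subgroup $\langle g_0,g\rangle$ is finitely generated, hence cyclic, say $\langle h\rangle$; since $G$ is torsion-free this is infinite cyclic, so I may write $g_0=mh$ and $g=nh$ with uniquely determined integers $m,n$ and $m\neq 0$ (as $g_0\neq 0$). I then set $\varphi(g)=n/m$.

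The main work, and the step I expect to be the chief obstacle, is verifying that $\varphi$ is a well-defined injective homomorphism. Well-definedness is the delicate point: any other generator of $\langle g_0,g\rangle$ is $\pm h$, and replacing $h$ by $-h$ sends $(m,n)$ to $(-m,-n)$, leaving the ratio $n/m$ unchanged, so $\varphi(g)$ depends only on $g$. For additivity, given $g,g'\in G$ I would pass to $\langle g_0,g,g'\rangle=\langle k\rangle$ — here local cyclicity on three generators is used — and write $g_0=ak$, $g=bk$, $g'=ck$ with $a\neq 0$. The key reconciliation is that $\langle g_0,g\rangle=\langle\gcd(a,b)\,k\rangle$, so computing inside this subgroup gives $\varphi(g)=b/a$, and similarly $\varphi(g')=c/a$; hence $\varphi(g+g')=(b+c)/a=\varphi(g)+\varphi(g')$. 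Injectivity is then immediate, since $\varphi(g)=0$ forces $n=0$ and thus $g=nh=0$. This produces the desired embedding and completes the harder implication.
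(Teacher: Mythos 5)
Your proof is correct. Note that the paper itself gives no proof of this proposition: it is stated as a known classical result, cited to Baer's 1937 paper and to Kurosh's \emph{Group Theory} (Chapter VIII, Section 30), so there is no in-paper argument to compare against. Your argument is a clean, self-contained version of the standard one: after observing that local cyclicity forces commutativity, you build the embedding by the ratio construction, fixing $g_0\neq 0$ and sending $g$ to $n/m$ where $\langle g_0,g\rangle=\langle h\rangle$, $g_0=mh$, $g=nh$ (this is, in effect, the embedding of $G$ into $G\otimes_{\mathbb{Z}}\mathbb{Q}\cong\mathbb{Q}$ for a rank-one torsion-free abelian group, made explicit). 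You correctly identify and handle the two delicate points: well-definedness, where torsion-freeness gives uniqueness of the coefficients $m,n$ and the only ambiguity is the sign of the generator $h$, which cancels in the ratio; and additivity, where passing to the cyclic group $\langle g_0,g,g'\rangle=\langle k\rangle$ and reconciling via $\langle g_0,g\rangle=\langle\gcd(a,b)\,k\rangle$ shows $\varphi(g)=b/a$ independently of which ambient cyclic subgroup is used for the computation. The easy direction (subgroups of $\mathbb{Q}$ are torsion-free, and any finite set of fractions lies in $\frac{1}{b}\mathbb{Z}\cong\mathbb{Z}$) is also handled correctly. What your write-up buys over the paper's citation is a complete elementary proof; it matches in substance the argument in the cited sources.
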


\begin{prop}
Let $G$ be a torsion-free group. Then the following two statements are
equivalent:
\begin{itemize}\itemsep0pt
\item $G$ is embedded in $\mathbb{Q}$;
\item for any two non-trivial subgroups $A$ and $B$ of $G$, we have
$A \cap B\neq \{1\}$.
\end{itemize}
\end{prop}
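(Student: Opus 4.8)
The plan is to derive this equivalence from the proposition stated immediately above, which identifies the torsion-free groups embeddable in $\mathbb{Q}$ with the torsion-free \emph{locally cyclic} groups (those in which every finitely generated subgroup is cyclic). Since $G$ is torsion-free by hypothesis, it suffices to prove that, for a torsion-free group, the condition that any two non-trivial subgroups meet non-trivially is equivalent to local cyclicity; the proposition then follows at once.

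The forward implication (from embeddability, hence from local cyclicity) is routine. Suppose $G$ is locally cyclic and let $A,B$ be non-trivial subgroups; choose $a\in A$, $b\in B$ with $a,b\neq 1$. Then $\langle a,b\rangle=\langle z\rangle$ is cyclic, so $a=z^i$ and $b=z^j$ with $i,j\neq 0$, whence $a^j=z^{ij}=b^i$ is a non-identity element of $A\cap B$. (Equivalently, realising $G$ as a subgroup of $\mathbb{Q}$ and picking non-zero $a\in A$, $b\in B$, one clears denominators in the rational $a/b$ to get a common non-zero combination $qa=pb\in A\cap B$.) Thus the intersection condition holds.

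For the reverse implication I would show that the intersection condition forces every two-generated subgroup $\langle x,y\rangle$ to be cyclic; local cyclicity then follows by a straightforward induction on the number of generators, much as in the proof of Lemma~\ref{component}. Fix non-identity $x,y$. By hypothesis $\langle x\rangle\cap\langle y\rangle\neq\{1\}$, so there is $w\neq 1$ with $w=x^a=y^b$ for some non-zero $a,b$; as a power of $x$ and of $y$, the element $w$ commutes with both, hence is central in $H=\langle x,y\rangle$. The key step is commutativity of $x$ and $y$. Since $w=y^b$ is central, conjugation by $y$ fixes $w=x^a$, giving $(y^{-1}xy)^a=x^a$, so $x$ and $x':=y^{-1}xy$ are two $a$-th roots of the same central element $w$. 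If such roots are unique we conclude $y^{-1}xy=x$, so $H$ is abelian; being a two-generated torsion-free abelian group carrying the relation $x^a=y^b$ with $a,b\neq 0$, it has rank $1$, hence $H\cong\mathbb{Z}$ is cyclic, as required.

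The main obstacle is precisely extracting this uniqueness of roots (equivalently, commutativity) from the intersection hypothesis alone. My plan of attack is to set $t=x^{-1}x'=[x,y]$, assume $t\neq 1$, and intersect the non-trivial subgroups $\langle t\rangle$ and $\langle w\rangle$ of $H$ to obtain $t^m=w^n=x^{an}$ for some non-zero $m,n$; thus a non-trivial power of $t$ lies in $\langle x\rangle$ and is central in $H$. The goal is to combine this with the identity $(xt)^a=x^a=w$ and with torsion-freeness to force $t=1$. The delicate point is that $(xt)^a$ does not split as $x^a t^a$ without commutativity, so one must exploit that the intersection property is inherited by every subgroup of $H$ (in particular by the centraliser $C_H(w)$, in which $w$ is central) to compare $\langle x\rangle$, $\langle t\rangle$ and $\langle w\rangle$ inside a group with large centre and thereby rule out two distinct roots of $w$. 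I expect this comparison — showing that distinct $a$-th roots of a central element cannot coexist — to be the crux of the entire argument; once it is in place, the rank-$1$ computation and the induction on generators are purely formal.
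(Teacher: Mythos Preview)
The paper does not prove this proposition at all: it is quoted from \cite{ref3} and used as a black box (and in fact only the easy forward direction is ever applied, to a pair of groups already realised as unitary subgroups of $\mathbb{Q}$). So there is no ``paper's own proof'' to compare against; what matters is whether your argument stands on its own.

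Your forward direction is fine. The reverse direction, however, has a genuine gap, and it is one you correctly identify but do not close: from $x^a=y^b$ central in $H=\langle x,y\rangle$ you need $x$ and $y$ to commute, and you try to obtain this from uniqueness of $a$-th roots of a central element. The intersection hypothesis alone does not give you this. In fact the reverse implication is \emph{false} for arbitrary torsion-free groups: Ol'shanski\u{\i} constructed non-abelian torsion-free groups $G$ possessing an infinite cyclic normal subgroup $N$ such that every subgroup of $G$ either contains $N$ or is contained in $N$; in such a group any two non-trivial subgroups meet non-trivially, yet $G$ does not embed in $\mathbb{Q}$. So the obstacle you isolate is not merely delicate --- it is insurmountable without an additional hypothesis.

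The fix is to assume $G$ abelian (which is certainly the setting of \cite{ref3}, and is all the present paper needs, since the proposition is invoked only after both groups have been shown to be locally cyclic). Under that hypothesis your sketch goes through immediately: the relation $x^a=y^b$ with $a,b\neq 0$ forces the finitely generated torsion-free abelian group $\langle x,y\rangle$ to have rank~$1$, hence to be infinite cyclic, and the induction on generators from Lemma~\ref{component} then yields local cyclicity and the embedding in $\mathbb{Q}$ via the preceding proposition.
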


\paragraph{Proof of Lemma~\ref{Lemma 3.16}}
As $P(G) \cong P(H)$, from our previous results it follows that both $H$ and
$G$ are torsion-free and locally cyclic. Hence by Proposition 1 and
Theorem~\ref{Unitary subgroup} we can consider $H$ and $G$ to be unitary
subgroups of $\mathbb{Q}$. Without loss of generality, we can assume $H\neq G$.
By the second Proposition above, we have $G \cap H\neq\{1\}$, and since
$G \cap H$ is torsion-free, it follows that $G \cap H$ contains an infinite
cyclic subgroup of $\mathbb{Q}$.

Let $x \in G \cap H$. Then for all $y \in O(x)$, we have $y \in G \cap H$
(as $G \cap H$ is a subgroup of $\mathbb{Q}$). It follows that for $y \in H$,
if there exists $x \in G \cap H$ such that $x \in I(y)$, then $y \in O(x)$,
and so $y\in G \cap H$. We deduce:

For all $y\in H\setminus G$, if $x\in G\cap H $ and $x\sim y$, then $x\in O(y)$.

For such a $y$, we have $\langle y \rangle\cap(G \cap H)\neq \{1\}$ (by the
second Proposition), so there exists $x \in G \cap H$ such that $x \in O(y)$.
Therefore, using Lemma~\ref{Q} we can recognize $O(y)'$ in $N(y)'$  as the
only connected component that has an element in $G \cap H$. Hence we can
determine $I(y)'$ as well.

Let $f$ be an isomorphism $f: P(G) \to P(H)$. Let $x \in G$ such that 
$x \neq 1_G$. If $f(x) \in H \setminus G$, then by arguments above we can
determine $O(f(x))$ and $I(f(x))$ in $P(H)$. If $f(x) \in G \cap H$, then
$O(f(x)) = \langle f(x) \rangle \leq G \cap H$. However, we know all
directions in $\vec P(G\cap H)$, hence again $O(f(x))$ is determined in $P(H)$, 
and by looking at $N(f(x))'$ in $P(H)'$ and using Lemma~\ref{Q} we can
determine $I(f(x))$ as well.

By Lemma~\ref{Cardinalities}, either $I(x)$ is finite for all $x \in G$ or it
is infinite for all $x \in G \setminus\{1\}$.

Consider now the first case. Let $x \in G$ be such that $x \neq 1_G$. Then
$f(x) \neq 1_H$. It follows by Lemma~\ref{isom_nbrs} that for all $y \in H$,
$I(y)$ is finite. Hence using Lemma~\ref{Q}, as $G$ and $H$ are torsion free,
the only infinite connected components of $N(x)'$ and $N(f(x))'$ are precisely
$O(x)'$ and $O(f(x))'$ respectively. But then again using
Lemma~\ref{isom_nbrs} we deduce $f:I(x) \to I(f(x))$ and $f:O(x) \to O(f(x))$.
As this is true for all $x \in G$ such that $x \neq 1_G$ and $f(1_G) = 1_H$ we
conclude that $f$ induces an isomorphism $f:\vec{P}(G) \to \vec{P}(H)$.

In the second case, we deduce by Lemma~\ref{isom_nbrs} that for all
$y \in H  \setminus\{1_H\}$, $I(y)$ is infinite. We find an isomorphism
$f': \vec{P}(G) \to \vec{P}(H)$. Fix $z_0 \in G$ such that $z_0 \neq 1_G$ and
let $f'(z_0)=f(z_0)$ and $ f'(1_G)=1_H$. Let $z_1 \in G$ such that
$z_1 \sim z_0$ and $z_1 \neq z_0$. By the previous arguments, $O(f'(z_0))$ and
$I(f'(z_0))$ are determined. Hence, if $z_1 \in O(z_0)$, let
$f'(z_1) \in O(f'(z_0))$. Similarly, if $z_1 \in I(z_0)$, let
$f'(z_1) \in I(f'(z_0))$. Finally, if $z_1=z_0^{-1}$, let
$f'(z_1)=f'^{-1}(z_0)$. Then directions of the path $z^1=(z_0,z_1)$ agree
with those of the corresponding path $f'(z)^1=(f'(z_0),f'(z_1))$ and
$f': \{1_G,z_0,z_1\} \to H$ is an injection. We can continue in this manner
to define $f'$ in such a way that it respects the path directions. However, it
remains to show that we can do so in an injective manner.

Thus, let $n \in \mathbb{N}$ and assume that the directions of the path
$(z_0,z_1,\ldots,z_n)$ agree with those of the corresponding path
$(f'(z_0),f'(z_1),\ldots,f'(z_n))$. Furthermore, assume that
\[f': \{1_G,z_0,z_1,\ldots,z_n\} \to H\]
is an injection. Let $z_{n+1} \in G$ be such that $z_{n+1} \sim z_n$ and
$z_{n+1} \neq z_n$. Denote $S:=\{z_i:z_i=z_{n+1}\}$ and
$M:=\{f'(z_i):i \in \{1,2,\ldots,n\}\}$. 

By our results, $O(f'(z_n))$ and $I(f'(z_n))$ are determined. If there exists
$z_i \in S$ such that $z_i=z_{n+1}$, then let $f'(z_{n+1})=f'(z_i)$, also if
$z_{n+1}=z_n^{-1}$, let $f'(z_{n+1})=f'^{-1}(z_n)$. Otherwise, as
$O(f'(z_n))\cup 1_H\cong\mathbb{Z}$ (using the fact that $f'(z_n) \neq 1_H$ and
$H$ is torsion-free), it follows that $O(f'(z_n))$ is infinite. Hence if
$z_{n+1} \in O(z_n)$, we can let $f'(z_{n+1}) \in O(f'(z_n))$ such that
$f'(z_{n+1}) \notin M$. If $z_{n+1} \in I(z_n)$, then by our arguments above
as $f'(z_n) \neq 1_H$, $I(f'(z_n))$ is infinite, therefore we can let
$f'(z_{n+1}) \in I(f'(z_n))$ such that $f'(z_{n+1}) \notin M$. We now have
that the directions of the path $(z_0,z_1,\ldots,z_{n+1})$ agree
with those of the corresponding path $(f'(z_0),f'(z_1),\ldots,f'(z_{n+1}))$.
Furthermore, $f': \{1_G,z_0,z_1,\ldots,z_{n+1}\} \to H$ is an injection. Since
the graph $P(G)$ is connected, we can reach any point in $P(H)$ by a path
starting at $f'(z_0)$. Thus, continuing in this manner we can define $f'$ to
be an isomorphism $f':\vec{P}(G) \to \vec{P}(H)$, as required. \qed

\medskip

Thus, we have the result (Theorem~\ref{t1}) that classifies all torsion-free
nilpotency class 2 groups with respect to their power graphs:

\begin{theorem}
Let $G$ and $H$ be nilpotency class $2$ torsion-free groups. Then
$P(G) \cong P(H)$ implies $\vec{P}(G) \cong \vec{P}(H)$.
\end{theorem}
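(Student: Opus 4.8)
The plan is to reduce the general statement to the connected case already settled in Lemma~\ref{Lemma 3.16}, using the fact (Lemma~\ref{component}) that the connected components of the power graph of a torsion-free nilpotency class~$2$ group are subgroups. The genuine content sits in Lemma~\ref{Lemma 3.16}; the present theorem is essentially a globalization over components.

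First I would fix a power graph isomorphism $f\colon P(G)\to P(H)$. Since $G$ and $H$ are torsion-free, in each the identity is the unique isolated vertex: the identity is joined to no non-identity element (for $1=a^n$ or $a=1^n$ forces $a=1$), while any $a\ne 1$ is joined to $a^2$. Hence $f$ sends $1_G$ to $1_H$ and restricts to a bijection between the non-trivial connected components of $P(G)$ and those of $P(H)$, carrying each component $C$ of $P(G)$ isomorphically onto a component $C'$ of $P(H)$.

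Next I would pin down the structure of a single component. By Lemma~\ref{component}, $\widehat{C}:=C\cup\{1_G\}$ is a subgroup of $G$, and the argument there shows that any two of its elements generate a cyclic group, so $\widehat{C}$ is torsion-free and locally cyclic. It therefore embeds in $\mathbb{Q}$ by the cited characterization of subgroups of $\mathbb{Q}$; in particular $\widehat{C}$ is abelian, hence of nilpotency class at most~$2$, and its non-identity elements form a single connected component. The same applies to $\widehat{C'}:=C'\cup\{1_H\}$ inside $H$, since $H$ is likewise torsion-free of class~$2$. The restriction of $f$ (extended by $1_G\mapsto 1_H$) is a power graph isomorphism $P(\widehat{C})\to P(\widehat{C'})$, so Lemma~\ref{Lemma 3.16} applies and yields $\vec{P}(\widehat{C})\cong\vec{P}(\widehat{C'})$.

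Finally I would assemble these directed isomorphisms. Both $\vec{P}(G)$ and $\vec{P}(H)$ are disjoint unions, over their respective components, of the directed graphs $\vec{P}(\widehat{C})$ and $\vec{P}(\widehat{C'})$, glued only at the identity. Using the bijection between components supplied by $f$, together with a choice of directed isomorphism $\vec{P}(\widehat{C})\to\vec{P}(\widehat{C'})$ for each matched pair (each sending identity to identity, so the gluing is consistent and bijective on vertices), I obtain a global isomorphism $\vec{P}(G)\cong\vec{P}(H)$. Note that I invoke \emph{existence} of a directed isomorphism per component rather than using $f$ itself, since $f$ restricted to a component need not preserve orientations. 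The assembly is routine; the only point requiring care is the verification that every component really is a subgroup embedding in $\mathbb{Q}$, so that Lemma~\ref{Lemma 3.16} is applicable, and this is exactly what Lemma~\ref{component} and the embedding criterion provide.
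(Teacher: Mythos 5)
Your proposal is correct and follows essentially the same route as the paper, whose proof is exactly the one-line observation that the theorem follows from Lemma~\ref{component} together with Lemma~\ref{Lemma 3.16} applied component by component. You have merely made explicit the details the paper leaves implicit (the identity maps to the identity, each component with the identity adjoined is a locally cyclic subgroup so that Lemma~\ref{Lemma 3.16} applies, and the component-wise directed isomorphisms glue consistently), including the correct caution that $f$ itself need not preserve orientations, only that a directed isomorphism exists per component.
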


\begin{proof}
 This is an immediate consequence of Lemma~\ref{component} and
Lemma~\ref{Lemma 3.16}, since an isomorphism from the power graph of $G$ to
the power graph of $H$ is an isomorphism between their connected components.
\end{proof}

It is not true in general that the power graph of a locally cyclic torsion-free
group determines the group up to isomorphism. Before giving a counterexample,
let us introduce new definitions.

For a height function $h$ and a positive integer
$m=p_1^{\alpha_1} \cdots p_n^{\alpha_n}$, where each $p_i$ is a prime and
$\alpha_i \in \mathbb{N}$, define $mh(p)$ as the height function given by
\[mh(p)=\cases{h(p) + \alpha_i & if $p=p_i$,\cr 
h(p) & otherwise.\cr}\]

For two height functions $h$ and $f$ we write $h \equiv f$ if, and only if,
there exist non-negative integers $m$ and $n$ such that $mh=nf$. In other
words, $h\equiv f$ if and only if $h$ and $f$ differ in only finitely many
positions, and they differ only finitely in these positions.

\begin{prop}
The relation $\equiv$ is an equivalence relation.
\end{prop}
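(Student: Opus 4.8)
The plan is to verify the three defining properties of an equivalence relation, using throughout the single algebraic observation that the operation $h \mapsto mh$ respects multiplication of the integer parameter. Concretely, writing $v_p(m)$ for the exponent of the prime $p$ in the factorization of a positive integer $m$, the definition gives $(mh)(p) = h(p) + v_p(m)$ for every prime $p$, with the convention $\infty + k = \infty$. Since $v_p(mn) = v_p(m) + v_p(n)$ and addition on $\mathbb{N} \cup \{0, \infty\}$ is associative and commutative, it follows at once that $(mn)h = m(nh) = n(mh)$ for all positive integers $m, n$, and that applying a fixed $m$ to both sides of an equality of height functions preserves that equality. (I read the phrase ``non-negative integers'' in the definition as ``positive integers'', so that the factorization defining $mh$ always makes sense.) These two facts are all the argument needs.

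Reflexivity and symmetry are then immediate. For reflexivity take $m = n = 1$, giving $1 \cdot h = 1 \cdot h$. For symmetry, the condition $mh = nf$ is literally symmetric in the pairs $(m, h)$ and $(n, f)$, so $h \equiv f$ yields $nf = mh$ and hence $f \equiv h$.

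The substance of the proof is transitivity, where I would concentrate the work. Suppose $h \equiv f$ and $f \equiv g$, witnessed by positive integers with $mh = nf$ and $pf = qg$. The idea is to scale the two witnessing equalities so that they share a common expression in $f$. Applying the operation $p$ to both sides of $mh = nf$ gives $(pm)h = (pn)f$, and applying $n$ to both sides of $pf = qg$ gives $(np)f = (nq)g$. Since $(pn)f = (np)f$, chaining these equalities yields $(pm)h = (nq)g$, so $h \equiv g$ with witnesses $pm$ and $nq$.

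I expect the only real care needed to be the bookkeeping around the value $\infty$: one must confirm that the additive structure on $\mathbb{N} \cup \{0, \infty\}$ used in defining $mh$ genuinely satisfies $(mn)h = m(nh)$ even when $h(p) = \infty$ for some $p$, which it does because $\infty + k = \infty$ is associative with the finite additions. Once that associativity is secured, the transitivity step is a one-line chain of equalities, and there is no further obstacle.
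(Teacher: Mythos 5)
Your proof is correct; the paper itself omits the argument, saying only that ``the proof is straightforward,'' and your verification --- reflexivity via $m=n=1$, symmetry from the symmetry of the defining equation $mh=nf$, and transitivity by scaling the two witnessing equalities through the identity $(mn)h=m(nh)$, justified prime-by-prime using $(mh)(p)=h(p)+v_p(m)$ with the convention $\infty+k=\infty$ --- is exactly the intended routine argument. Your reading of ``non-negative integers'' as \emph{positive} integers is also the right repair of the paper's wording, since $0h$ is undefined by the given factorization-based definition and $m=1$ (the empty factorization) already supplies the trivial witness.
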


The proof is straightforward.

Now the following is shown in \cite{ref3}:

\begin{theorem}\label{Isomorphism of unitary subgroups}
Let $A$ and $B$ be two unitary subgroups of the rationals. Then $A \cong B$ if,
and only if, $h_A \equiv h_B$.
\end{theorem}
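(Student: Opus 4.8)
The plan is to reduce everything to two facts about unitary subgroups of $\mathbb{Q}$ together with one explicit computation. The first fact is a \emph{membership criterion}: combining Lemma~\ref{Lemma 1} and Lemma~\ref{Lemma 2}, a rational $a/b$ written in lowest terms lies in a unitary subgroup $A$ if and only if $v_p(b)\le h_A(p)$ for every prime $p$, where $v_p$ denotes the $p$-adic valuation. In particular a unitary subgroup is \emph{uniquely determined by its height function}, so $h_A=h_B$ already forces $A=B$. The second fact is that every group isomorphism between subgroups of $\mathbb{Q}$ is multiplication by a fixed rational: if $\varphi\colon A\to B$ is an isomorphism and $a\in A\setminus\{0\}$, then for any $x\in A$ we have $x=(s/t)a$ with $s,t\in\mathbb{Z}$, whence $t\varphi(x)=s\varphi(a)$ and $\varphi(x)=(\varphi(a)/a)x$; thus $\varphi$ is multiplication by $c:=\varphi(a)/a$ and $B=cA$.

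The heart of the argument is a \emph{scaling lemma} describing how the height function transforms under multiplication. Writing $c=n/m>0$ in lowest terms and assuming $1/c\in A$ (which is exactly the condition that $cA$ again contains $1$), I would show
\[ h_{cA}(p)=h_A(p)+v_p(m)-v_p(n)\qquad\hbox{for every prime }p. \]
This is proved by the membership criterion together with a three-way case split according to whether $p\mid m$, $p\mid n$, or $p\nmid mn$: since $1/p^\alpha\in cA$ if and only if $m/(np^\alpha)\in A$, one reads off the exponent of $p$ in the reduced denominator in each case. Primes with $h_A(p)=\infty$ are handled separately and simply remain infinite. In the notation of the paper this identity says precisely that $m\,h_A=n\,h_{cA}$.

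With these in hand both implications are short. For the direction $h_A\equiv h_B\Rightarrow A\cong B$, suppose $m\,h_A=n\,h_B$; after cancelling I may assume $\gcd(m,n)=1$. Evaluating the relation at a prime $p\mid n$ gives $h_A(p)=h_B(p)+v_p(n)\ge v_p(n)$, so by Lemma~\ref{Lemma 1} and Lemma~\ref{Lemma 2} we have $1/c\in A$ for $c=n/m$; the scaling lemma then yields $h_{cA}=h_B$, and the membership criterion gives $cA=B$, so multiplication by $c$ is the desired isomorphism. For the converse, an isomorphism $A\cong B$ is multiplication by some $c$; since negation is an automorphism of $\mathbb{Q}$ the group $cA$ has the same height function as $|c|A$, so I may assume $c>0$, and as $B=cA$ is unitary we have $1/c\in A$, so the scaling lemma gives $m\,h_A=n\,h_B$ and hence $h_A\equiv h_B$.

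The step I expect to require the most care is the scaling lemma: getting the reduced-denominator bookkeeping right in all three cases, checking that the sign conventions there match the paper's definition of $m\,h$, and correctly treating the primes where the height is infinite (so that $h_A\equiv h_B$ forces the infinite coordinates of $h_A$ and $h_B$ to coincide exactly). Everything else is a direct consequence of the membership criterion and the observation that subgroup isomorphisms of $\mathbb{Q}$ are scalar multiplications.
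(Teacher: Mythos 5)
Your proposal is correct, but there is nothing in the paper to compare it against: the paper states this theorem without proof, quoting it from Miller \cite{ref3}. What you have done is supply a complete, self-contained argument from the two quoted lemmas, and it is essentially the standard route (also that of the cited source). I verified the three ingredients: (i) Lemmas~\ref{Lemma 1} and \ref{Lemma 2} do yield the membership criterion --- writing $v_p$ for the $p$-adic valuation, a fraction $m/n$ in lowest terms lies in $A$ iff $v_p(n)\le h_A(p)$ for all $p$ --- so a unitary subgroup is determined by its height function; (ii) any isomorphism between subgroups of $\mathbb{Q}$ is multiplication by a fixed rational $c$, by exactly the computation you give; (iii) your scaling identity checks out: for $c=n/m>0$ in lowest terms with $1/c\in A$, one gets $h_{cA}(p)=h_A(p)+v_p(m)$ when $p\mid m$, $h_{cA}(p)=h_A(p)-v_p(n)$ when $p\mid n$ (legitimate since $m/n\in A$ forces $v_p(n)\le h_A(p)$), and $h_{cA}(p)=h_A(p)$ otherwise, with infinite heights remaining infinite; in the paper's notation this is $m\,h_A=n\,h_{cA}$. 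Both implications then go through as you describe, and the delicate points you flag are exactly the right ones: cancelling $\gcd(m,n)$ in $m\,h_A=n\,h_B$ is harmless at primes of infinite height (the relation forces $h_A(p)=\infty$ iff $h_B(p)=\infty$), and in the converse one may replace $c$ by $|c|$ since $cA=|c|A$ for any subgroup. One cosmetic remark: the paper's definition of $\equiv$ says ``non-negative integers $m$ and $n$,'' but positive integers are clearly intended (the paper's $mh$ is defined via the prime factorization of $m$), and your proof correctly uses positive $m,n$.
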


Fix a prime $p$ and consider the subgroup of $\mathbb{Q}$, denoted by $G_p$,
generated by all the negative powers of $p$ (it consists of all rational
numbers whose denominator is a power of $p$). Every element of $G_p$ can be
written as a product of powers of primes, where the prime $p$ can have negative
exponent but all the other exponents are non-negative. The height function of
this group has the form
\[h_{G_p}(q)=\cases{\infty & if  $q=p$, \cr 
0 & otherwise.\cr}\]
It follows by Theorem~\ref{Isomorphism of unitary subgroups} that $G_p$ is not isomorphic to $G_q$ whenever $p \neq q$. However, we will show that $\vec{P}(G_p) \cong \vec{P}(G_q)$.

\begin{theorem}
Let $p$ and $q$ be two primes such that $p \neq q$. Let $\pi:P \to P$ be the 
transposition $ \pi = (p, q)$. Then the map $\varphi:G_p \to G_q$ defined by 
\[\pm p_1^{\alpha_1} \cdots p_n^{\alpha_n} 
\mapsto \pm \pi(p_1)^{\alpha_1} \cdots  \pi(p_n)^{\alpha_n}\]
and $0 \mapsto 0$,
induces an isomorphism $\varphi:\vec{P}(G_p) \to \vec{P}(G_q)$.
\end{theorem}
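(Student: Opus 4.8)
The plan is to prove directly that $\varphi$ is a bijection which both preserves and reflects the arc relation of the directed power graph; note that $\varphi$ is manifestly \emph{not} a group homomorphism (the operation on $G_p$ is addition, whereas $\varphi$ is defined through the multiplicative prime factorization), so there is no shortcut through a group isomorphism. Recall that, writing $G_p$ and $G_q$ additively as subgroups of $\mathbb{Q}$, the arc $x\to y$ in $\vec{P}(G_p)$ means $y=nx$ for some $n\in\mathbb{Z}\setminus\{0\}$; equivalently, for $x\neq 0$, that $y/x\in\mathbb{Z}\setminus\{0\}$. The whole argument will be carried out by comparing prime-exponent vectors.

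First I would fix notation: every nonzero rational has a unique expression $\pm\prod_r r^{e_r}$ over the primes $r$, with $e_r\in\mathbb{Z}$ and all but finitely many equal to $0$. Membership $x\in G_p$ is exactly the condition that $e_r\ge 0$ for every prime $r\neq p$ (while $e_p$ may be negative). Since $\pi=(p,q)$ is an involution, $\varphi$ sends the element with exponent vector $(e_r)$ to the element with exponent vector $(e_{\pi(r)})$; in particular the $q$-exponent of $\varphi(x)$ is $e_p$ (possibly negative, which is permitted in $G_q$), the $p$-exponent of $\varphi(x)$ is $e_q\ge 0$, and all other exponents are unchanged and stay non-negative. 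Hence $\varphi(x)\in G_q$, so $\varphi$ is well defined, and the analogous map $G_q\to G_p$ built from the same transposition is a two-sided inverse; together with $\varphi(0)=0$ this shows $\varphi$ is a bijection.

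Next I would verify that $\varphi$ preserves and reflects arcs. For nonzero $x,y$ with exponent vectors $(e_r)$ and $(f_r)$, the quotient $y/x$ has exponent vector $(f_r-e_r)$, and $y/x$ is a nonzero integer precisely when $f_r\ge e_r$ for every prime $r$. Applying $\varphi$ replaces these vectors by $(e_{\pi(r)})$ and $(f_{\pi(r)})$, so $\varphi(y)/\varphi(x)$ is a nonzero integer exactly when $f_{\pi(r)}\ge e_{\pi(r)}$ for all $r$. Because $\pi$ is a bijection of the set of primes, this last condition is equivalent to $f_s\ge e_s$ for all primes $s$. Therefore $x\to y$ in $\vec{P}(G_p)$ if and only if $\varphi(x)\to\varphi(y)$ in $\vec{P}(G_q)$, and together with the fact that $0$ is the isolated vertex fixed by $\varphi$ this gives the required isomorphism of directed power graphs.

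The only point requiring care is the bookkeeping in the well-definedness step: one must track the asymmetry between the single prime whose exponent may be negative and all the others through the transposition $\pi$, to confirm that $\varphi$ really lands in $G_q$ rather than in some larger subgroup of $\mathbb{Q}$. Once that is in place, the arc-preservation reduces to the elementary observation that componentwise domination of integer exponent vectors is invariant under permuting the coordinates, so no deeper input is needed.
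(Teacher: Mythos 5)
Your proposal is correct and follows essentially the same route as the paper's proof: both establish bijectivity via the explicit inverse built from the same transposition, and both translate the arc relation $y=nx$ into componentwise domination of prime-exponent vectors, which is visibly invariant under permuting the primes by $\pi$. Your explicit well-definedness check (that $\varphi$ lands in $G_q$) and the remark that the sign only contributes $\pm 1$ make the bookkeeping slightly more careful than the paper's, but the argument is the same.
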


\begin{proof}
Observe that the map $\theta: G_q \to G_p$ given by
\[\pm p_1^{\alpha_1} \cdots p_n^{\alpha_n} 
\mapsto \pm \pi^{-1}(p_1)^{\alpha_1} \cdots  \pi^{-1}(p_n)^{\alpha_n}\]
and $0\mapsto0$,
is an inverse of $\varphi$. Hence $\varphi$ is a bijection. Let $a,b \in G$ 
be such that $a\rightarrow b$ in $\vec{P}(G_p)$. Factorize $a$ and $b$ as
\begin{eqnarray*}
a &=& \pm 2^{\alpha_1}3^{\alpha_2} \cdots p_i^{\alpha_i} \cdots,\\
b &=& \pm 2^{\beta_1}3^{\beta_2} \cdots p_i^{\beta_i} \cdots,
\end{eqnarray*}
where each
$p_i$ is a prime and the exponents are allowed to be zero. By our assumption,
there exists $m \in \mathbb{Z}$ such that $b=ma$. However, this is equivalent
to saying that $\alpha_i \leq \beta_i$ for all $i$. Now as
\begin{eqnarray*}
\varphi(a) &=& \pm \pi(2)^{\alpha_1}\pi(3)^{\alpha_2} \cdots \pi(p_i)^{\alpha_i} \cdots,\\
\varphi(b) &=& \pm \pi(2)^{\beta_1}\pi(3)^{\beta_2} \cdots \pi(p_i)^{\beta_i} \cdots,
\end{eqnarray*}
and $\alpha_i \leq \beta_i$ for all $i$, it follows that
$\varphi(a)\rightarrow \varphi(b)$ in $\vec{P}(G_q)$. This is true for all
such $a,b \in G_p$ and $\varphi(0)=0$. Similarly, $a \to b$ if
$\varphi(a) \to \varphi(b)$. It follows that $\varphi$ induces the required
isomorphism. \qed
\end{proof}

\section{Open problem}

We mention a problem which we have been unable to solve.

If $G$ is a torsion-free nilpotent group of class $2$, and $H$ a group with
$P(G)\cong P(H)$, is it true that $\vec{P}(G) \cong \vec{P}(H)$?

\end{document}